\documentclass[a4paper,twoside,11pt,reqno]{amsart}
\usepackage[utf8]{inputenc}
\usepackage[dvips]{graphicx}
\usepackage{amsmath}
\usepackage{amsfonts}
\usepackage{amssymb}
\usepackage{listings}
\usepackage{subfigure}
\usepackage{enumerate}
\usepackage{placeins}
\usepackage{color}             
\usepackage{colortbl}          
\usepackage{latexsym}
\usepackage{cite}
\usepackage{tikz}
\usepackage{pstricks}
\usepackage{pst-plot}
\usepackage{pst-slpe}
\usepackage{xcolor}

\definecolor{azulPersonal}{RGB}{0,128,255}

\oddsidemargin=-0.5cm
\evensidemargin=-0.5cm
\textwidth=16.5cm  

\setlength{\topmargin}{0cm}
\setlength{\textheight}{23 cm}

\def\R{\mathbb{R}}

\newcommand{\dis}{\displaystyle}
\newtheorem{Th}{Theorem}[section]
\newtheorem{Rem}{Remark}[section]
\newtheorem{Le}{Lemma}[section]

\newtheorem{Cor}{Corollary}[section]


\title[Attractors for the dissipative Signorini problem]{Global attractors for the Signorini problem  \\ with pointwise damping}

\author[J.E. Mu\~{n}oz Rivera]{Jaime E. Mu\~{n}oz Rivera}%
\address{Universidad del B\'\i o B\'\i o Collao 1202, Casilla 5C, \newline
\indent Concepci\'{o}n - Chile}

\address{Laboratório Nacional de Computação Científica (LNCC) , \newline
\indent Petr\'{o}polis - Brasil}
\email{jemunozrivera@gmail.com}%

\author[M.G. Naso]{Maria Grazia Naso}
\address{Dipartimento di Ingegneria Civile, Architettura, Territorio, Ambiente e di Matematica,
Universit\`{a} degli Studi di Brescia, Via Valotti 9, 25133
Brescia,
Italia.}%
\email{mariagrazia.naso@unibs.it}%

\subjclass [2020]{35Q74, 35B40, 74K10}%
\keywords{Timoshenko beams, contact problem, semilinear problem, long-time behaviour}%
\begin{document}
\maketitle
\begin{center}
    \textit{Dedicated to Vittorino Pata on the occasion of his 60th birthday}
\end{center}

\vspace{1em}

\newcommand{\cuatex}[3]
{\begin{center}
\setlength{\unitlength}{2.54cm}
\begin{picture}(#1,#2)
\linethickness{2pt}
\put(0,0){\special{bmp:c:/suc_ensi/graduacao/calculoI/amarelo.bmp x=#1 in y=#2 in}}
\put(0,0){\framebox(#1,#2){\begin{minipage}{13.5cm}#3\end{minipage}}}
\end{picture}
\end{center}}


\begin{abstract}
The existence of global attractors is investigated for the Signorini problem with pointwise dissipation. It is shown that both the semilinear Signorini problem and the elastic obstacle problem with normal compliance exhibit exponential decay to zero and admit compact global attractors. To establish these results, the original problem is approximated by a hybrid PDE-ODE system, which allows for a rigorous analysis of well-posedness and the long-time behavior of its solutions.
\end{abstract}

\section{Introduction}

The existence of global attractors has been extensively studied in the context of semilinear partial differential equations (see, e.g., \cite{Lady,Teman,Pata-twoquestions,Miranville-Zelik,BV,Temam-EFN}). However, within the framework of variational inequalities, this subject remains largely unexplored. In this article, we investigate the existence of global attractors for the Signorini problem applied to the Timoshenko model.  

Let $0 < T \leq \infty$. We denote by $\varphi = \varphi(x, t) : (0, \ell) \times (0, T) \to \mathbb{R}$ the transverse displacement (vertical deflection) of the cross section at $x\in (0,\ell)$ and at time $t\in (0,T)$. Assuming that plane cross sections remain plane, the angle of rotation of a cross section is defined by $\psi = \psi(x, t) : (0, \ell) \times (0, T) \to \mathbb{R}$.
We then consider the system
 \begin{eqnarray}\label{SignoriniProblem}
 \begin{array}{lll}
  \rho_1\varphi_{tt}-k(\varphi_x+\psi)_x+\gamma_1\delta(x-\xi)\varphi_t +F(\varphi)=0,\\
  \noalign{\medskip}
  \rho_2\psi_{tt}-b\psi_{xx}+k(\varphi_x+\psi)+\gamma_2\delta(x-\xi)\psi_t+G(\psi)=0.
  \end{array}
 \end{eqnarray}
We suppose that, at $x=0$ and $x=\ell$,
 \begin{eqnarray}\label{BoundConditions1}
 \varphi(0,t)=0, \quad\psi_x(0,t)=0,\quad\psi (\ell,t)=0,
 \quad \text{in $(0,T)$}.
 \end{eqnarray}
  The joint at $x = \ell$ is modeled using the Signorini non-penetration condition (see, e.g., \cite{KS}). Specifically, the joint, characterized by a gap $g$, is asymmetrical, with $g = g_1 + g_2$, where $g_1 > 0$ and $g_2 > 0$ represent the upper and lower clearances, respectively, when the system is at rest. Consequently, the right end of the left beam is constrained to move vertically only between two stops
 \begin{eqnarray}\label{Cond.Signorini}
  g_1\leq\varphi(\ell,t)\leq g_2,\quad \text{with $0\leq t\leq T$}.
 \end{eqnarray}
 This condition assures that the displacement at $x=\ell$ is constrained between the stops
 $g_1$ and $g_2.$
 In addition,     the mathematical boundary conditions
 associated with this physical setup are as follows:
 \begin{equation}\label{Hip.Stress}
  \begin{array}{ccl}
   S(\ell,t)\geq0 &\mbox{if}& \varphi(\ell,t)=g_1,\\
 	&&\\
   S(\ell,t)=0 &\mbox{if}& g_1<\varphi(\ell,t)<g_2,\\
 	&&\\
   S(\ell,t)\leq 0 &\mbox{if}& \varphi(\ell,t)=g_2.\\
  \end{array}
 \end{equation}
  The initial conditions are prescribed as
 \begin{equation}\label{InitialConditions}
\begin{array}{lll}
  \varphi(x,0)=\varphi_0(x),& \varphi_t(x,0)=\varphi_1(x),  & \forall x\in(0,\ell),\\
  \noalign{\medskip}
  \psi(x,0)=\psi_0(x), & \psi_t(x,0)=\psi_1(x), &  \forall x\in(0,\ell),
  \end{array}
 \end{equation}
 where $\varphi_0, \varphi_1,\psi_0, \psi_1:(0,\ell)\to\R$ denote given functions.

The physical system is depicted in Fig.~\ref{FIG.1}, defined over the domain \((0, \ell) \times (0, T)\). The coefficients are established as follows: \(\rho_1 = \rho A\) represents the mass density, \(\rho_2 = \rho I\) denotes the mass moment of inertia, \(k = \kappa G A\) corresponds to the shear modulus of elasticity, and \(b = EI\) signifies the cross-sectional rigidity, where \(E\) is Young's modulus, \(G\) is the modulus of rigidity, \(\kappa\) is the transverse shear factor, and \(I\) is the moment of inertia. The functions \(S = k (\varphi_x + \psi)\) and \(M = b \psi_x\) represent the shear force and bending moment, respectively. Additionally, \(\delta(x - \xi)\) denotes the Dirac delta function with a unit mass at \(x = \xi\). 
{The nonlinear functions  $F$ and $G$  represent either the intrinsic nonlinear stiffness of the system or an external static control force.}
Finally, \(\gamma_1\) and \(\gamma_2\) are positive damping coefficients.
Subscripts $x$ and $t$ represent partial derivatives with respect to $x$ and $t$.
%


%
\begin{center}
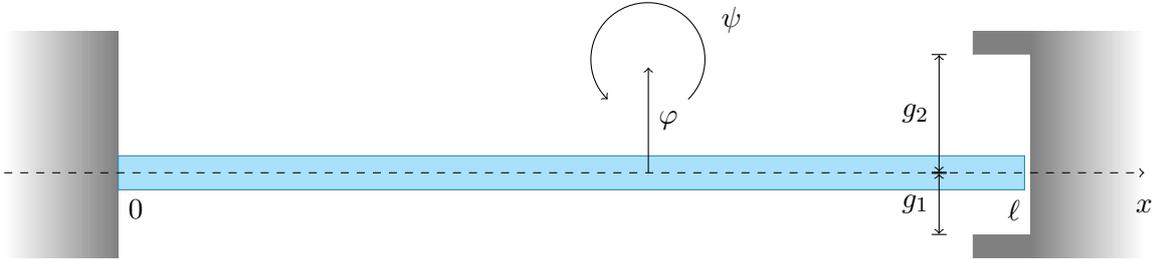
\begin{figure}[!ht]
\begin{tikzpicture}[scale=1.5]
\shade[shading=axis,shading angle=90] (6,-0.6) rectangle +(1,2);
\shade [shading=axis,shading angle=-90] (-3,-0.6) rectangle +(1,2);
\filldraw[fill=cyan!30!white, draw=cyan!60!black] (-2,0) rectangle +(7.95,0.3);
\filldraw[fill=black!50!white, draw=black!50!white] (5.5,-0.4) rectangle +(0.5,-0.2);
\filldraw[fill=black!50!white, draw=black!50!white] (5.5,1.2) rectangle +(0.5,0.2);
\draw[dashed,->] (-3,0.15) -- (7,0.15);
\draw[|<->|] (5.2,0.15) -- node[left] {$g_2$} (5.2,1.2);
\draw[|<->|] (5.2,0.15) -- node[left] {$g_1$} (5.2,-0.4);
\fill (-2,0)  node[below right] {$0$};
\fill (6,0)  node[below left] {$\ell$};
\fill (7,0)  node[below] {$x$};
\draw[->] (2.65,0.15) -- node[right] {$\varphi$} (2.65,1.08);
\draw[->] (3,0.8) arc [start angle=-45, end angle=225, radius=0.5] ;
\fill (3.2, 1.5) node[right] {$\psi$};
\end{tikzpicture}
\\
\caption{Beam subjected to a constraint at the free end $x=\ell$.}
\end{figure}\label{FIG.1}
\end{center}
Before proceeding, let us recall pertinent results from the literature on contact problems \cite{RiveraArantes,Barber,BRN,Eck,Kikuchi,KS,RiveraHigidio,Wriggers}.
This list is by no means exhaustive and it is intended solely to offer a concise overview of the developments achieved to date in this field.

In this paper, we propose a novel and nonstandard approach to the analysis of the Signorini problem by coupling the linear Timoshenko system with a dynamic boundary condition described by an ordinary differential equation. This leads to a hybrid PDE–ODE formulation, where the interaction is mediated by a coupling parameter $\epsilon$; see system \eqref{Tr1.1} below.
  We employ semigroup theory to establish the well-posedness of the problem and to prove the exponential stability of the associated system. We arrive at the contact problem with a normal compliance condition by introducing a Lipschitz perturbation of the original system. Finally, setting $\epsilon\rightarrow 0$ we get the Signorini problem.
This approach is made possible by the observability inequalities inherent to the Timoshenko model. We argue that this method is more effective than the standard penalty technique (see, e.g., \cite{RiveraArantes,KS,RiveraHigidio} and the references therein)
as it provides more accurate insights into the asymptotic behavior of the solution. In particular, we show that the boundary conditions have no influence on the long-time dynamics. This implies that the exponential decay result holds for arbitrary boundary conditions—unlike in \cite{RiveraArantes,BRN,RiveraHigidio}, where specific boundary conditions were essential to establish exponential stability.

The remainder of this manuscript is organized as follows. In Section~\ref{sec-semilin}  we prove that an abstract semilinear problem is well-posed and we show that, under suitable conditions, there exists a global compact attractor.
Section~\ref{sec2} is devoted to the study of a linear hybrid model  approaching the penalized problem, associated to \eqref{SignoriniProblem}--\eqref{Hip.Stress}. In particular, we find an useful observability result for the Timoshenko model.
 In Section~\ref{sec4} we consider some applications to contact problems with normal compliance condition.
Finally in Section~\ref{sec-Signorini} we analyze a Signorini-type problem and we establish the existence of a global compact attractor.

\section{Semigroup and General Results}\label{sec-semilin}
\setcounter{equation}{0}

In this section we establish the well-posedness of an abstract semilinear problem and demonstrate, under appropriate conditions, the existence of a global compact attractor. 

To proceed, the following hypotheses must be formulated.

Let $\mathcal{H}$ be a Hilbert space with $(\cdot,\cdot)_{\mathcal{H}}$ and $\|\cdot\|_{\mathcal{H}}$ its inner product and norm, respectively.

Let us denote by  $ \mathcal {F} $  a  local  Lipschitz function on $ \mathcal {H} $ verifying 
\begin{equation}\label{ff0}
\mathcal{F}(0)=0\in \mathcal {H} .
\end{equation}
We suppose that
for any ball $ B_R=\{W\in\mathcal{H}:\;\; \|W\|_{\mathcal{H}}\leq R\} $, there exists a function globally of Lipschitz $\widetilde{\mathcal{F}_R}$ such that
\begin{equation}\label{ff1}
\mathcal{F}(U)=\widetilde{\mathcal{F}_R}(U),\quad \forall U\in B_R
\end{equation}
and additionally, that there exists a positive constant $\kappa_0$ such that
\begin{equation}\label{ff2}
\int_0^t\big(\widetilde{\mathcal{F}_R}(U(s)),U(s)\big)_{\mathcal{H}}\;ds \leq \kappa_0 \|U(0)\|_{\mathcal{H}}^2,\qquad \forall U\in C([0,T];\mathcal{H})
\end{equation}

Under these conditions we present
\begin{Th}\label{Lipschitz}
Let \( \{T(t)\}_{t \geq 0} \) be a \( C_0 \)-semigroup of contractions, exponentially stable, with infinitesimal generator \( \mathbb{A} \) on a Hilbert space \( \mathcal{H} \). Let \( \mathcal{F} : \mathcal{H} \to \mathcal{H} \) be a locally Lipschitz continuous function satisfying conditions \eqref{ff0}, \eqref{ff1} and \eqref{ff2}
. Then, for any initial condition \( U_0 \in \mathcal{H} \), there exists a unique global mild solution to the abstract Cauchy problem
\begin{equation}
\label{absCE}
U_t - \mathbb{A} U = \mathcal{F}(U), \quad U(0) = U_0,
\end{equation}
which decays exponentially to zero as \( t \to \infty \). Moreover if \( U_0 \in D(\mathcal{A}) \) then the mild solution is the strong solution of \eqref{absCE}. 
\end{Th}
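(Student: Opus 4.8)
The plan is to combine the standard semigroup approach for locally Lipschitz perturbations with the a priori bound \eqref{ff2} to upgrade local existence to global existence, and then to exploit the exponential stability of $\{T(t)\}_{t\ge 0}$ together with a Duhamel/Gronwall argument to obtain exponential decay. I would proceed in the following steps.

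First, local well-posedness. Fix $U_0\in\mathcal H$ and choose $R>2\|U_0\|_{\mathcal H}$. Replace $\mathcal F$ by the globally Lipschitz function $\widetilde{\mathcal F_R}$ furnished by \eqref{ff1}. For the truncated problem $U_t-\mathbb A U=\widetilde{\mathcal F_R}(U)$, $U(0)=U_0$, the variation of constants formula
\begin{equation}
U(t)=T(t)U_0+\int_0^t T(t-s)\widetilde{\mathcal F_R}(U(s))\,ds
\end{equation}
defines a contraction on $C([0,\tau];\mathcal H)$ for $\tau$ small (using that $\|T(t)\|\le 1$ and the global Lipschitz constant $L_R$ of $\widetilde{\mathcal F_R}$), hence a unique mild solution exists on a maximal interval. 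This is entirely routine (Banach fixed point), and on $[0,\tau]$ the truncated solution coincides with a solution of the original equation as long as $\|U(t)\|_{\mathcal H}\le R$.

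Second, the global a priori bound. Here is where \eqref{ff2} enters. Working first with a strong solution (for $U_0\in D(\mathbb A)$, or by density of $D(\mathbb A)$ in $\mathcal H$ and passing to the limit in the mild formulation), one computes $\frac{d}{dt}\tfrac12\|U(t)\|_{\mathcal H}^2=(\mathbb A U,U)_{\mathcal H}+(\widetilde{\mathcal F_R}(U),U)_{\mathcal H}\le (\widetilde{\mathcal F_R}(U),U)_{\mathcal H}$, since $\mathbb A$ is dissipative (its semigroup is contractive). Integrating and using \eqref{ff2},
\begin{equation}
\tfrac12\|U(t)\|_{\mathcal H}^2\le \tfrac12\|U_0\|_{\mathcal H}^2+\kappa_0\|U_0\|_{\mathcal H}^2,
\end{equation}
so $\|U(t)\|_{\mathcal H}\le\sqrt{1+2\kappa_0}\,\|U_0\|_{\mathcal H}$ uniformly in $t$. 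Choosing $R$ larger than this bound from the outset (e.g. $R> \sqrt{1+2\kappa_0}\,\|U_0\|_{\mathcal H}$), the solution of the truncated problem never leaves $B_R$, hence solves the original problem; by the standard blow-up alternative the maximal interval of existence is $[0,\infty)$, and uniqueness is inherited from the truncated problem. This gives the unique global mild solution, and the strong solution statement for $U_0\in D(\mathbb A)$ follows from the classical regularity theory for semilinear equations with Lipschitz (on bounded sets) nonlinearity.

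Third, exponential decay. By hypothesis there are $M\ge1$, $\omega>0$ with $\|T(t)\|_{\mathcal L(\mathcal H)}\le Me^{-\omega t}$. From the uniform bound $\|U(t)\|_{\mathcal H}\le \sqrt{1+2\kappa_0}\,\|U_0\|_{\mathcal H}=:R_0$, and since $\mathcal F$ agrees with $\widetilde{\mathcal F_{R_0}}$ on $B_{R_0}$, we have the global Lipschitz estimate $\|\mathcal F(U(s))\|_{\mathcal H}=\|\mathcal F(U(s))-\mathcal F(0)\|_{\mathcal H}\le L\|U(s)\|_{\mathcal H}$ along the trajectory, with $L=L_{R_0}$. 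Then
\begin{equation}
\|U(t)\|_{\mathcal H}\le Me^{-\omega t}\|U_0\|_{\mathcal H}+\int_0^t Me^{-\omega(t-s)}L\|U(s)\|_{\mathcal H}\,ds,
\end{equation}
and multiplying by $e^{\omega t}$ and applying Gronwall gives $\|U(t)\|_{\mathcal H}\le M\|U_0\|_{\mathcal H}e^{(ML-\omega)t}$. The main obstacle is exactly that this naive Gronwall step only yields decay when $ML<\omega$, which need not hold. To get unconditional exponential decay one must instead iterate on a long time window: choose $t_0$ with $Me^{-\omega t_0}\le \tfrac14$; on $[0,t_0]$ Gronwall gives $\|U(t)\|_{\mathcal H}\le C_0\|U_0\|_{\mathcal H}$ with $C_0=Me^{ML t_0}$, and then estimate $\|U(t_0)\|_{\mathcal H}$ more carefully using the a priori integral bound: splitting the Duhamel integral and using \eqref{ff2} to control $\int_0^{t_0}(\mathcal F(U),U)\,ds$ — equivalently, re-running the energy identity on $[0,t_0]$ — one shows $\|U(t_0)\|_{\mathcal H}\le q\|U_0\|_{\mathcal H}$ for some $q<1$ \emph{provided} $\kappa_0$ and the decay rate are compatible; the cleanest route, and the one I expect the authors to take, is to observe that the energy inequality $\tfrac{d}{dt}\tfrac12\|U\|_{\mathcal H}^2\le (\mathcal F(U),U)_{\mathcal H}$ combined with \eqref{ff2} and the exponential stability of the \emph{linear} part (a Lyapunov/observability estimate of the form $\int_t^{t+1}\|U\|_{\mathcal H}^2\,ds \ge c\|U(t+1)\|_{\mathcal H}^2$, which is what exponential stability of $\{T(t)\}$ provides after absorbing the Lipschitz term for $\kappa_0$ small) yields a recursive inequality $\|U(n+1)\|_{\mathcal H}^2\le q\|U(n)\|_{\mathcal H}^2$. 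Iterating this discrete inequality and interpolating over unit intervals gives $\|U(t)\|_{\mathcal H}\le C\|U_0\|_{\mathcal H}e^{-\alpha t}$ for suitable $C,\alpha>0$, which is the claimed exponential decay.
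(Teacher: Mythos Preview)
Your first two steps (local well-posedness via truncation and the global a priori bound from dissipativity plus \eqref{ff2}) are correct and essentially identical to the paper's argument; the paper also works with $U_0\in D(\mathbb A)$ first, uses $\|U^R(t)\|_{\mathcal H}^2\le (1+\kappa_0)\|U_0\|_{\mathcal H}^2$, and concludes $U^R=U$ globally once $R$ is chosen large.

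The gap is in Step 3. You correctly identify that the naive Gronwall estimate only yields decay when $ML<\omega$, but your attempted repair does not close without an extra hypothesis: both the ``splitting the Duhamel integral'' route and the ``observability/Lyapunov'' route you sketch explicitly require either $\kappa_0$ small or the Lipschitz constant small relative to $\omega$, and the theorem assumes neither. The discrete recursion $\|U(n+1)\|^2\le q\|U(n)\|^2$ simply does not follow from \eqref{ff2} and exponential stability of $T(t)$ alone, because \eqref{ff2} is a one-sided integral bound in terms of $U(0)$, not a dissipation bound on each unit interval.

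The paper avoids Gronwall entirely. Its decay argument is a fixed-point argument directly in the weighted space
\[
E_{\gamma-\varepsilon}=\Bigl\{V\in L^\infty(0,\infty;\mathcal H):\ t\mapsto e^{(\gamma-\varepsilon)t}\|V(t)\|_{\mathcal H}\in L^\infty\Bigr\},
\]
where $\|T(t)\|\le c_0e^{-\gamma t}$. One checks that the Duhamel map $\mathcal T(V)=T(t)U_0+\int_0^t T(t-s)\widetilde{\mathcal F_R}(V(s))\,ds$ leaves $E_{\gamma-\varepsilon}$ invariant: for $V\in E_{\gamma-\varepsilon}$,
\[
\|\mathcal T(V)(t)\|_{\mathcal H}\le \|U_0\|_{\mathcal H}e^{-\gamma t}+K_0e^{-\gamma t}\int_0^t e^{\varepsilon s}\,ds\,\sup_{s}e^{(\gamma-\varepsilon)s}\|V(s)\|_{\mathcal H}\le Ce^{-(\gamma-\varepsilon)t}.
\]
Together with the iterated estimate $\|\mathcal T^n(W_1)-\mathcal T^n(W_2)\|\le \frac{(k_1t)^n}{n!}\|W_1-W_2\|$, this forces the unique fixed point (the mild solution) to lie in $E_{\gamma-\varepsilon}$, giving exponential decay with rate $\gamma-\varepsilon$ without any smallness condition on $K_0$ or $\kappa_0$. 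This invariance-in-a-weighted-space device is the missing idea in your Step 3.
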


\begin{proof} By hypotheses, there exist positive constants $c_0$ and $\gamma$ such that
$
\|T(t)\|\leq c_0e^{-\gamma t},
$
and  $\widetilde{\mathcal{F}_R}$ globally Lipschitz with Lipschitz constant $K_0$ verifying conditions (\ref{ff1}) and   (\ref{ff2}). Let us suppose that $U_0\in D(\mathbb{A})$, 
{the final result will then follow by standard density arguments, since the dissipative nature of $\mathbb{A}$ allows us to extend the conclusions to arbitrary initial data in the phase space $\mathcal{H}$.}

It is well know that  there exists only one global mild solution to
\begin{equation}\label{absCE2}
U_{t}^R-\mathbb{A}U^R=\widetilde{\mathcal{F}_R}(U^R),\quad U^R(0)=U_0\in D(\mathbb{A}).
\end{equation}
Since the phase space \( \mathcal{H} \) is reflexive, then we have that $U^R$ is a strong solution to \eqref{absCE2}, see \cite[p.~189, Theorem 1.6]{pazy}. Hence  $U^R\in L^\infty(0,T;D(\mathbb{A}))$.
Multiplying  equation \eqref{absCE2} by $U^R$ we get that
$$
\frac 12 \frac{d}{dt}\|U^R(t)\|_{\mathcal{H}}^2-(\mathbb{A}U^R, U^R)_{\mathcal{H}}=(\widetilde{\mathcal{F}_R}(U^R),U^R)_{\mathcal{H}}.
$$
Since the semigroup is contractive, its infinitesimal generator is dissipative, therefore
$$
\|U^R(t)\|_{\mathcal{H}}^2\leq \|U_0\|_{\mathcal{H}}^2+2\int_0^t(\widetilde{\mathcal{F}_R}(U^R),U^R)_{\mathcal{H}}\;dt.
$$
Using (\ref{ff2}) we get
\begin{equation}\label{BoundIn}
\|U^R(t)\|_{\mathcal{H}}^2\leq (1+k_0)\|U_0\|_{\mathcal{H}}^2.
\end{equation}
Note that for $R> (1+k_0)\|U_0\|_{\mathcal{H}}^2$, hypothesis \eqref{ff1} yields 
$$
\widetilde{\mathcal{F}_R}(V)=\mathcal{F}(V),\quad \text{for any $V$ with}\;\; \|V\|_{\mathcal{H}}\leq R.
$$
In particular $U^R\in B_R$ so we have 
$$
\widetilde{\mathcal{F}_R}(U^R(t))=\mathcal{F}(U^R(t)).
$$
This means that $U^R$ is also solution of system (\ref{absCE}) and because of the uniqueness we conclude that $U^R=U$. Hence $U$ is the global solution of (\ref{absCE}).

To show the exponential stability to system (\ref{absCE}), it is enough to show the exponential decay to system (\ref{absCE2}). To do that,  we use fixed points arguments.
Let us consider 
$$
\mathcal{T}(V)=T(t)U_0+\int_0^{t}T(t-s)\widetilde{\mathcal{F}_R}(V(s))\;ds.
$$
and let us define the set 
$$
E_{\mu}=\left\{V\in L^\infty(0,\infty;\mathcal{H});\;\; t\mapsto e^{\mu t}\|V(s)\|\in L^\infty(\R)\right\}.
$$

Note that $\mathcal{T}$ is invariant over $E_{\gamma-\varepsilon}$ for $\varepsilon$ small and $\gamma-\varepsilon>0$. In fact, for any $V\in E_{\gamma-\varepsilon}$ we have
\begin{eqnarray*}
\|\mathcal{T}(V)\|_{\mathcal{H}}&\leq&\|U_0\|_{\mathcal{H}}e^{-\gamma t}+\int_0^t\|\widetilde{\mathcal{F}_R}(V(s))\|_{\mathcal{H}}e^{-\gamma(t-s)}\;ds,\\
&\leq&\|U_0\|_{\mathcal{H}}e^{-\gamma t}+K_0\int_0^t\|V(s)\|_{\mathcal{H}}e^{-\gamma(t-s)}\;ds,\\
&\leq&\|U_0\|_{\mathcal{H}}e^{-\gamma t}+K_0e^{-\gamma t}\int_0^te^{\varepsilon s}\;ds \sup_{s\in [0,t]}\left\{ e^{(\gamma-\varepsilon) s}\|V(s)\|_{\mathcal{H}}\right\},\\
&\leq&\|U_0\|_{\mathcal{H}}e^{-\gamma t}+\frac{K_0C}{\varepsilon}e^{-(\gamma -\varepsilon)t}.
  \end{eqnarray*}
Hence $\mathcal{T}(V)\in E_{\gamma-\varepsilon}$.
Using standard arguments we can show that $\mathcal{T}^n$ satisfies
\begin{eqnarray}\label{idd2}
\|\mathcal{T}^n(W_1)-\mathcal{T}^n(W_2)\|\leq \frac{(k_1t)^n}{n!}\|W_1-W_2\|_{\mathcal{H}}.
  \end{eqnarray}
Where {$k_1$ is the Lipschitz constant of $\mathcal{F}_R$}. Therefore we have a unique fixed point satisfying

$$
\mathcal{T}^n(U)=U=T(t)U_0+\int_0^{t}T(t-s)\widetilde{\mathcal{F}_R}(U(s))\;ds,
$$
that is $U$ is a solution of  (\ref{absCE2}), and since $\mathcal{T}$ is invariant over $E_{\gamma-\varepsilon}$, then the solution decays exponentially.

\end{proof}

\begin{Cor}\label{LipschitzhG} Let $\mathbb{A}$ as in Theorem \ref{Lipschitz} and 
let $\mathcal{F}$ satisfying conditions \eqref{ff0}--\eqref{ff2}.
Let us denote by $\mathcal{F}_1(U)=\mathcal{F}(U)+F_0$ with $F_0\in \mathcal{H}$. 
 Then there exists a global solution to 
 \begin{equation}
\label{absCEF1}
U_t - \mathbb{A} U = \mathcal{F}_1(U), \quad U(0) = U_0,
\end{equation}
verifying that  for any $R>0$ there exists $t_0=t_0(R)$ such that the solution of \eqref{absCEF1} satisfies 
\begin{equation}\label{absor}
\|U(t)\|_{\mathcal{H}}\leq  c_1e^{-\mu t}R+c_2\|F_0\|_{\mathcal{H}},\quad \forall t\geq t_0
\end{equation}
for any $\|U_0\|_{\mathcal{H}}\leq R$.
\end{Cor}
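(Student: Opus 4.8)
The plan is to reduce Corollary~\ref{LipschitzhG} to Theorem~\ref{Lipschitz} by a change of variables that absorbs the constant forcing term $F_0$ into the data, and then to track how the decay estimate degrades under this shift. First I would observe that $\mathcal{F}_1$ is locally Lipschitz (being a translate of the locally Lipschitz $\mathcal{F}$), so the local existence and uniqueness of a mild solution to \eqref{absCEF1} follows from standard semigroup theory; the issue is \emph{global} existence together with the quantitative absorbing estimate \eqref{absor}. The natural device is Duhamel's formula: the mild solution satisfies
\begin{equation*}
U(t) = T(t)U_0 + \int_0^t T(t-s)\mathcal{F}(U(s))\,ds + \int_0^t T(t-s)F_0\,ds.
\end{equation*}
The last term is controlled uniformly: since $\|T(t)\| \leq c_0 e^{-\gamma t}$, one has $\big\|\int_0^t T(t-s)F_0\,ds\big\|_{\mathcal{H}} \leq (c_0/\gamma)\|F_0\|_{\mathcal{H}}$ for all $t\geq 0$, which already isolates the $c_2\|F_0\|_{\mathcal{H}}$ contribution in \eqref{absor}.

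The key step is the a priori bound guaranteeing that the solution stays in a fixed ball, so that the truncation $\widetilde{\mathcal{F}_R}$ coincides with $\mathcal{F}$ along the trajectory — this is exactly the mechanism used in the proof of Theorem~\ref{Lipschitz}. Taking the inner product of \eqref{absCEF1} with $U$ (working first with $U_0\in D(\mathbb{A})$ and a globally Lipschitz truncation, then passing to the limit by density as in Theorem~\ref{Lipschitz}), dissipativity of $\mathbb{A}$ gives
\begin{equation*}
\tfrac12\frac{d}{dt}\|U\|_{\mathcal{H}}^2 \leq \big(\mathcal{F}(U),U\big)_{\mathcal{H}} + \big(F_0,U\big)_{\mathcal{H}}.
\end{equation*}
Integrating and using \eqref{ff2} to absorb the $\mathcal{F}$ term, together with Young's inequality $\big(F_0,U\big)_{\mathcal{H}} \leq \tfrac{\eta}{2}\|U\|_{\mathcal{H}}^2 + \tfrac{1}{2\eta}\|F_0\|_{\mathcal{H}}^2$ and Grönwall, yields a bound of the form $\|U(t)\|_{\mathcal{H}}^2 \leq C\big(\|U_0\|_{\mathcal{H}}^2 + \|F_0\|_{\mathcal{H}}^2\big)$ on $[0,T]$ with $C$ independent of $T$; this confirms the solution does not blow up and selects an $R$ (depending only on the initial bound and $\|F_0\|_{\mathcal{H}}$) for which $\widetilde{\mathcal{F}_R}(U)=\mathcal{F}(U)$ along the orbit, giving global existence.

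For the asymptotic estimate \eqref{absor} I would run the fixed-point / Duhamel contraction argument of Theorem~\ref{Lipschitz} in the weighted space $E_{\gamma-\varepsilon}$, but now applied to the equation with source $\mathcal{F}(U)+F_0$: the homogeneous part $T(t)U_0$ and the $\mathcal{F}$-contribution decay like $e^{-(\gamma-\varepsilon)t}$ exactly as before, contributing $c_1 e^{-\mu t}R$ with $\mu = \gamma-\varepsilon$, while the $F_0$-integral is bounded by $(c_0/\gamma)\|F_0\|_{\mathcal{H}}$ uniformly in $t$, contributing $c_2\|F_0\|_{\mathcal{H}}$. Choosing $t_0=t_0(R)$ so that $c_1 e^{-\mu t_0}R$ is dominated by the transient part then gives \eqref{absor} for all $t\geq t_0$. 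The main obstacle I anticipate is purely bookkeeping: the condition \eqref{ff2} is stated as an integral inequality controlling $\int_0^t(\widetilde{\mathcal{F}_R}(U),U)\,ds$ by $\|U(0)\|_{\mathcal{H}}^2$, so one must be a little careful that the presence of $F_0$ — which enlarges the ball the trajectory visits and hence the relevant $R$ — does not create a circularity in the choice of $R$ versus the constant in \eqref{ff2}; this is resolved because the bound on $\|U(t)\|_{\mathcal{H}}$ from the energy estimate depends on $\|U_0\|_{\mathcal{H}}$ and $\|F_0\|_{\mathcal{H}}$ but not on $R$, so $R$ can be fixed afterwards.
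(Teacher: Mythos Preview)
Your approach is in the same spirit as the paper's --- both rely on Duhamel's formula and the fixed-point machinery from Theorem~\ref{Lipschitz}, working in a set of trajectories that decay exponentially up to an additive constant of order $\|F_0\|_{\mathcal{H}}$. The difference is mainly bookkeeping: the paper does not split off the $F_0$-integral directly but instead computes the iterates $\mathcal{T}^n(0)$ of the Duhamel map explicitly, obtaining
\[
\|\mathcal{T}^n(0)\|_{\mathcal{H}}\leq c_0\Big(1+t+\tfrac{t^2}{2}+\cdots+\tfrac{t^n}{n!}\Big)e^{-\gamma t}\|U_0\|_{\mathcal{H}}+\tfrac{t^n}{n!}\|F_0\|_{\mathcal{H}},
\]
and then combines this with the contraction estimate \eqref{idd2} for $\|\mathcal{T}^n(V)-\mathcal{T}^n(0)\|$ to place $\mathcal{T}^n(V)$ in the target set $\widetilde{E}_\mu$ for $n$ large. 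Your direct bound $\big\|\int_0^t T(t-s)F_0\,ds\big\|\le (c_0/\gamma)\|F_0\|$ is cleaner and achieves the same thing in one step; the paper's iterate calculation gains nothing extra.

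One point to correct: your claim that the energy estimate plus Young and Gr\"onwall yields $\|U(t)\|_{\mathcal{H}}^2\le C(\|U_0\|_{\mathcal{H}}^2+\|F_0\|_{\mathcal{H}}^2)$ with $C$ independent of $T$ is not right as stated. Dissipativity only gives $\mathrm{Re}\,(\mathbb{A}U,U)\le 0$, not a spectral gap, so after integrating and applying Young you obtain
\[
\|U(t)\|_{\mathcal{H}}^2\le (1+2\kappa_0)\|U_0\|_{\mathcal{H}}^2+\tfrac{t}{\eta}\|F_0\|_{\mathcal{H}}^2+\eta\!\int_0^t\|U(s)\|_{\mathcal{H}}^2\,ds,
\]
and Gr\"onwall then produces an $e^{\eta t}$ factor. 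This does not break your argument: a $T$-dependent bound suffices for global existence of the truncated problem, and the \emph{uniform-in-$t$} bound you actually need for~\eqref{absor} (and for fixing the truncation radius independently of $t$) comes from the Duhamel estimate itself, not from the energy identity. Just be sure to run the fixed-point argument with the globally Lipschitz $\widetilde{\mathcal{F}_R}$ first, extract the uniform bound $c_1 e^{-\mu t}\|U_0\|+c_2\|F_0\|$ from Duhamel, and only afterwards choose $R$ large enough (depending on the initial bound and $\|F_0\|$) to remove the truncation --- exactly as you outline in your final paragraph.
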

\begin{proof} 
 Let us denote by 
$$
\widetilde{E}_{\mu}=\left\{V\in L^\infty(0,\infty;\mathcal{H});\;\; \|V(s)\|_{\mathcal{H}}\leq   2c_0e^{-\mu t}\|U_0\|_{\mathcal{H}}+c\|F_0\|_{\mathcal{H}}\right\}.
$$
To do that,  we use fixed points arguments.
Let us consider the operator $\mathcal{T}$ given by 
$$
\mathcal{T}(V)=T(t)U_0+\int_0^{t}T(t-s)\widetilde{\mathcal{F}_R}(V(s))\;ds.
$$
First note that 
$$
\mathcal{T}(0)=T(t)U_0+\int_0^{t}T(t-s)F_0\;ds.
$$
Since $T(t)=e^{\mathbb{A}t}$ is exponentially stable, we have that 
$$
\|\mathcal{T}(0)\|_{\mathcal{H}}\leq c_0e^{-\gamma t}\|U_0\|_{\mathcal{H}}+t\|F_0\|_{\mathcal{H}}.
$$
Analogously, 
\begin{eqnarray*}
\mathcal{T}^2(0)&=&T(t)U_0+\int_0^{t}T(t-s)\left[T(s)U_0+\int_0^{s}T(s-\sigma )F_0\;d\sigma\right]\;ds\\
&=&T(t)U_0+\int_0^{t}T(t)U_0\;ds+\int_0^{t}T(t-s)\int_0^{s}T(s-\sigma )F_0\;d\sigma\;ds\\
&=&T(t)U_0+tT(t)U_0+\int_0^{t}T(t-s)\int_0^{s}T(s-\sigma )F_0\;d\sigma\;ds\\
\end{eqnarray*}
Since $T$ is a contraction semigroup we have that 
\begin{eqnarray*}
\|\mathcal{T}^2(0)\|_{\mathcal{H}}\leq c_0(1+t)e^{-\gamma t}\|U_0\|_{\mathcal{H}}+\frac{t^2}{2}\|F_0\|.
\end{eqnarray*}
Repeating the above procedure we have 
\begin{eqnarray*}
\|\mathcal{T}^3(0)\|_{\mathcal{H}}\leq c_0\left(1+t+\frac{t^2}{2}\right)e^{-\gamma t}\|U_0\|_{\mathcal{H}}+\frac{t^3}{3!}\|F_0\|.
\end{eqnarray*}
In general we have 
\begin{eqnarray*}
\|\mathcal{T}^n(0)\|_{\mathcal{H}}\leq c_0\left(1+t+\frac{t^2}{2}+\cdots +\frac{t^n}{n!}\right)e^{-\gamma t}\|U_0\|_{\mathcal{H}}+\frac{t^n}{n!}\|F_0\|.
\end{eqnarray*}
Using \eqref{idd2} for $W_1=V$ and $W_2=0$ we arrive to

\begin{eqnarray*}
\|\mathcal{T}^n(V)\|&\leq &\|\mathcal{T}^n(0)\|+\frac{(k_1t)^n}{n!}\|V\|_{\mathcal{H}}\\
&\leq &c_0\left(1+t+\frac{t^2}{2}+\cdots +\frac{t^n}{n!}\right)e^{-\gamma t}\|U_0\|_{\mathcal{H}}+\frac{t^n}{n!}\|F_0\|+\frac{(k_1t)^n}{n!}\|V\|_{\mathcal{H}}.\\
\end{eqnarray*}
Therefore for $n$ large we have that $\mathcal{T}^n(V)\in E_{\mu}$. 
\end{proof}

\bigskip
\noindent
The semigroup $T(t)$ admits the decomposition 
$$
T(t)=L(t)+N(t),
$$
such that, for a given initial data $U_0\in B_R(0)$ we set  
$$
T(t)U_0=U(t),\quad L(t)U_0=W(t),\quad N(t)U_0=V(t), 
$$
where 
\begin{equation}\label{absCEw1}
W_{t}-\mathbb{A}W=\mathcal{F}(W),\quad W(0)=U_0\in\mathcal{H},
\end{equation}
Note that from Theorem \ref{Lipschitz} there exists a solution $W$ to system \eqref{absCEw1} which decays exponentially. 
Hence the function $V=U-W$, where $U$ is the solution of \eqref{absCE}, satisfies 
\begin{equation}\label{absCEv}
V_{t}-\mathbb{A}V=\mathcal{F}_1(U)-\mathcal{F}(W),\quad V(0)=0\in\mathcal{H}.
\end{equation}
So we have that $V$ is globally defined.

\noindent 
Let us denote by $\mathfrak{B}$ a bounded operator on $\mathcal{H}$ such that $\mathfrak{B}U_t\in \mathcal{H}$. 
That is 
\begin{equation}\label{frakB}
\|\mathfrak{B}U_t\|_{\mathcal{H}}\leq c\|U\|_{\mathcal{H}}.
\end{equation}
We have that 
$$
\mathfrak{B}:C([0,T];\mathcal{H})\rightarrow C([0,T];\mathcal{H}).
$$
\begin{Rem}
As an example of an operator \(\mathfrak{B}\), consider the semigroup generated by the system described in \eqref{TTr1.1}. The corresponding state variable of the abstract model is defined as \( U = (\varphi, \varphi_t, \psi, \psi_t) \). We introduce the operator \(\mathfrak{B}\) acting on \( U \) as follows:

\[
\mathfrak{B}U = (0, \varphi, 0, \psi)^{\top}.
\]
It is straightforward to verify that this operator satisfies the condition specified in \eqref{frakB}.

\end{Rem}

\bigskip
\noindent
Additionally to \eqref{ff1}--\eqref{ff2} we assume that $\mathcal{F}$ is continuos differentiable and satisfies, 
\begin{equation}\label{Freg1}
\mathcal{F}(W)=\widetilde{\mathcal{F}}(\mathfrak{B}W),\quad \|D\mathcal{F}(Y)\|\leq c\|Y\|_{\mathcal{H}},\quad \forall \,Y\in B_R\subset \mathcal{H},
\end{equation}
\begin{equation}\label{Freg2}
\frac{\partial }{\partial t}\mathcal{F}(U),\quad \frac{\partial }{\partial t}\mathcal{F}(W)\in C(0,T;\mathcal{H}),
\end{equation}
where $D$ denotes the derivative of $\mathcal{F}$ and $R>0$. Using  \cite[p.~109, Corollary 4.2.11]{pazy}, we have that the solution of system \eqref{absCEF1} verifies 
$$
V\in C^1(0,T;\mathcal{H})\cap C(0,T;D(\mathcal{A})).
$$
Since the initial condition of the system \eqref{absCEv} vanishes, we have:
$$
\lim_{t\rightarrow 0} V(t)=0,\quad \lim_{t\rightarrow 0} \mathcal{F}_1(U)= \mathcal{F}(U_0)+F_0,\quad \lim_{t\rightarrow 0} \mathcal{F}(W)= \mathcal{F}(U_0) .
$$
Using the above limit in \eqref{absCEv} we get that 
$$
\lim_{t\rightarrow 0} V_t(t)=F_0.
$$
Differentiating equation \eqref{absCEv} we get 
$$
V_{tt}-\mathbb{A}V_t=D\mathcal{F}(U)\mathfrak{B} U_t-D\mathcal{F}(W) \mathfrak{B}W_t,\quad V_t(0)=F_0\in\mathcal{H},
$$
so we have 
\begin{equation}\label{absCEvDf}
V_{tt}-\mathbb{A}V_t=D\mathcal{F}(U)\mathfrak{B}V_t-\left[D\mathcal{F}(W)-D\mathcal{F}(U)\right]\mathfrak{B} W_t\quad V_t(0)=F_0\in\mathcal{H},
\end{equation}

\begin{Le}\label{lemaW}
Under the above conditions of $\mathcal{F}$ and assuming additionally that $\mathcal{F}$ is differentiable and $\mathbb{A}$ an infinitesimal generator of exponentially stable semigroup, then  the solution of \eqref{absCEv} satisfies $V\in C(0,T;D(\mathbb{A}))$ and
 $$
 \|V_t(t)\|_{\mathcal{H}}+
 \|V(t)\|_{D(\mathcal{A})}\leq c (\|F\|_{\mathcal{H}}+\|U_0\|_{\mathcal{H}}),\quad \forall t\geq t_0.
 $$
 
\end{Le}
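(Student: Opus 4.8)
The plan is to propagate the bound \eqref{absor} from Corollary~\ref{LipschitzhG} and the exponential decay from Theorem~\ref{Lipschitz} through the differentiated equation \eqref{absCEvDf}, and then use the elliptic regularity of $\mathbb{A}$ to recover the $D(\mathbb{A})$-bound on $V$. First I would record that, since $U$ solves \eqref{absCEF1} with $\|U_0\|_{\mathcal{H}}\le R$, Corollary~\ref{LipschitzhG} gives an absorbing bound $\|U(t)\|_{\mathcal{H}}\le c_1 e^{-\mu t}R + c_2\|F_0\|_{\mathcal{H}}$ for $t\ge t_0$; in particular $U(t)$ stays in a fixed ball $B_{R_0}\subset\mathcal{H}$ for $t\ge t_0$, with $R_0$ depending only on $\|F_0\|_{\mathcal{H}}$ (up to the transient term). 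Likewise $W$ solves \eqref{absCEw1} and decays exponentially by Theorem~\ref{Lipschitz}, so $\|W(t)\|_{\mathcal{H}}\le c_0 e^{-\gamma t}R$, and in particular $W(t)\in B_{R_0}$ as well. Thus on the relevant range both $U$ and $W$ lie where the bounds in \eqref{Freg1} on $D\mathcal{F}$ are available, and the coupling operator $\mathfrak{B}$ satisfies \eqref{frakB}.

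Next I would estimate $\|V_t(t)\|_{\mathcal{H}}$ using \eqref{absCEvDf} via the variation-of-constants formula with the exponentially stable semigroup $T(t)=e^{\mathbb{A}t}$. Writing
\[
V_t(t)=T(t-t_0)V_t(t_0)+\int_{t_0}^{t}T(t-s)\Big[D\mathcal{F}(U(s))\mathfrak{B}V_t(s)-\big(D\mathcal{F}(W(s))-D\mathcal{F}(U(s))\big)\mathfrak{B}W_t(s)\Big]\,ds,
\]
I bound the first term by $c_0 e^{-\gamma(t-t_0)}\|V_t(t_0)\|_{\mathcal{H}}$, and inside the integral I use $\|D\mathcal{F}(U(s))\|\le c\|U(s)\|_{\mathcal{H}}\le cR_0$ from \eqref{Freg1} together with $\|\mathfrak{B}V_t(s)\|_{\mathcal{H}}\le c\|V(s)\|_{\mathcal{H}}$ from \eqref{frakB}; for the forcing term $\|D\mathcal{F}(W)-D\mathcal{F}(U)\|\le cR_0$ (or, more sharply, Lipschitz in $\|W-U\|$) times $\|\mathfrak{B}W_t(s)\|_{\mathcal{H}}\le c\|W(s)\|_{\mathcal{H}}\le c\,c_0 e^{-\gamma s}R$, which is integrable. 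Combining this with the already-established $\mathcal{H}$-bound on $V=U-W$ (namely $\|V(t)\|_{\mathcal{H}}\le c(\|F_0\|_{\mathcal{H}}+\|U_0\|_{\mathcal{H}})$, which follows by adding the bounds for $U$ and $W$), a Gronwall argument on the integral inequality for $\|V_t\|_{\mathcal{H}}$ yields $\|V_t(t)\|_{\mathcal{H}}\le c(\|F_0\|_{\mathcal{H}}+\|U_0\|_{\mathcal{H}})$ for $t\ge t_0$. The starting datum $\|V_t(t_0)\|_{\mathcal{H}}$ is itself controlled: it either comes from the smooth case (where $V_t(0)=F_0$) and then propagates, or one restarts the analysis at $t_0$ using the regularity \eqref{Freg2} and \cite[p.~109, Corollary~4.2.11]{pazy} which already gave $V\in C^1(0,T;\mathcal{H})\cap C(0,T;D(\mathcal{A}))$.

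Finally, to pass from the $\mathcal{H}$-bound on $V_t$ to a $D(\mathbb{A})$-bound on $V$, I would read off from the equation itself \eqref{absCEv}: $\mathbb{A}V = V_t - \big(\mathcal{F}_1(U)-\mathcal{F}(W)\big)$, so
\[
\|V(t)\|_{D(\mathbb{A})}\le \|V(t)\|_{\mathcal{H}}+\|\mathbb{A}V(t)\|_{\mathcal{H}}\le \|V(t)\|_{\mathcal{H}}+\|V_t(t)\|_{\mathcal{H}}+\|\mathcal{F}_1(U(t))\|_{\mathcal{H}}+\|\mathcal{F}(W(t))\|_{\mathcal{H}},
\]
and the two nonlinear terms are bounded by $c\|U(t)\|_{\mathcal{H}}\|U(t)\|_{\mathcal{H}}+\|F_0\|_{\mathcal{H}}$ and $c\|W(t)\|_{\mathcal{H}}^2$ respectively (using \eqref{ff0}, \eqref{Freg1}, and the quadratic-type bound implicit in $\mathcal{F}(0)=0$ with $\|D\mathcal{F}(Y)\|\le c\|Y\|$), all controlled by $c(\|F_0\|_{\mathcal{H}}+\|U_0\|_{\mathcal{H}})$ on $t\ge t_0$. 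Continuity $V\in C(0,T;D(\mathbb{A}))$ follows from the quoted Pazy corollary together with continuity of the right-hand side of \eqref{absCEv} in $\mathcal{H}$. The main obstacle I anticipate is the Gronwall step for $\|V_t\|_{\mathcal{H}}$: one must be careful that the coefficient $\|D\mathcal{F}(U(s))\|$ in front of $\|\mathfrak{B}V_t(s)\|$ is genuinely bounded (not merely integrable), which is exactly where the absorbing estimate \eqref{absor} placing $U$ in a fixed ball is essential, and that the inhomogeneous term decays fast enough to keep the constant uniform in $t$ rather than growing; this is what forces the restriction $t\ge t_0$ in the statement.
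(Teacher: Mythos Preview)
Your proof is correct and follows essentially the same route as the paper: variation of constants for the differentiated equation \eqref{absCEvDf}, the crucial use of \eqref{frakB} to replace $\|\mathfrak{B}V_t\|_{\mathcal{H}}$ by $c\|V\|_{\mathcal{H}}$, and then reading off $\|\mathbb{A}V\|_{\mathcal{H}}$ from \eqref{absCEv}. The only superfluous step is your Gronwall argument: once you invoke $\|\mathfrak{B}V_t(s)\|_{\mathcal{H}}\le c\|V(s)\|_{\mathcal{H}}$, the right-hand side of the integral inequality no longer contains $\|V_t\|_{\mathcal{H}}$ at all (only the already-bounded $\|V\|_{\mathcal{H}}$ and the decaying $\|W\|_{\mathcal{H}}$), so the bound on $\|V_t(t)\|_{\mathcal{H}}$ follows by direct integration, exactly as the paper does starting from $t=0$ with $V_t(0)=F_0$.
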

\begin{proof}
Let us denote by 
$\mathfrak{W}=V_t$ the solution of 
\begin{equation}\label{SolD}
\mathfrak{W}_{t}-\mathbb{A}\mathfrak{W}=D\mathcal{F}(U)\mathfrak{B}V_t-\left[D\mathcal{F}(W)-D\mathcal{F}(U)\right]\mathfrak{B}W_t\quad \mathfrak{W}(0)=F_0\in\mathcal{H}.
\end{equation}
So we have 
\begin{equation}\label{WWW}
\mathfrak{W}(t)=e^{\mathbb{A}t}F_0+\int_0^te^{\mathbb{A}(t-s)}\left\{D\mathcal{F}(U)\mathfrak{B}V_t-\left[D\mathcal{F}(W)-D\mathcal{F}(U)\right]\mathfrak{B}W_t\right\}ds.
\end{equation}
Using \eqref{frakB} and since $U$ and $W$ are bounded for any $t\geq t_0$ we have:
\begin{eqnarray}
\|D\mathcal{F}(U)\mathfrak{B}V_t\|_{\mathcal{H}}&\leq &c\|U\|_{\mathcal{H}}\|V\|_{\mathcal{H}},\\
\noalign{\medskip}
\|(D\mathcal{F}(W)-D\mathcal{F}(U))\mathfrak{B}W_t\|_{\mathcal{H}}
&\leq &c\|U\|_{\mathcal{H}}\|W\|_{\mathcal{H}}.
\end{eqnarray}
Using Corollary \eqref{LipschitzhG} and  Theorem \eqref{Lipschitz},      we conclude that 
$$
\|V\|_{\mathcal{H}}\leq c\left(\|U_0\|_{\mathcal{H}}+\|F\|_{\mathcal{H}}\right),\quad \|W\|_{\mathcal{H}}\leq c\|U_0\|_{\mathcal{H}}e^{-\gamma t}.
$$
Inserting the above inequalities into \eqref{WWW} and recalling that $\mathbb{A}$ is exponentially stable, we get
\begin{equation}\label{WWW1}
\|\mathfrak{W}(t)\|_{\mathcal{H}}\leq c\|F_0\|_{\mathcal{H}}e^{-\gamma t}+c\int_0^te^{-\gamma (t-s)}\left(\|U\|_{\mathcal{H}}\|V\|_{\mathcal{H}}+\|U\|_{\mathcal{H}}\|W\|_{\mathcal{H}}\right)ds.
\end{equation}
Since $U$, $V$ and $W$ are bounded, we find 
\begin{equation*}
\|\mathfrak{W}(t)\|_{\mathcal{H}}\leq c\|F_0\|_{\mathcal{H}}e^{-\gamma t}+c\int_0^te^{-\gamma (t-s)}\left(\|U_0\|_{\mathcal{H}}+\|F\|_{\mathcal{H}}\right)ds.
\end{equation*}
From where we have 
\begin{equation*} 
\|\mathfrak{W}(t)\|_{\mathcal{H}}\leq c(\|U_0\|+\|F\|).
\end{equation*}
Using equation \eqref{absCEv} we conclude that 
$$
\|\mathbb{A}V(t)\|_{\mathcal{H}}^2\leq \|F_0\|_{\mathcal{H}}^2+c\|U_0\|_{\mathcal{H}}^2,\quad \forall t\geq 0.
$$
 Consequently, our conclusion follows.
\end{proof}

\begin{Th} \label{Attractor} Let us denote by $S(t)$ the semigroup defined by the abstract equation 
\eqref{absCE}. Suppose additionally that the immersion $D(\mathbb{A})\subset \mathcal{H}$ is compact,  then $S(t)$ possesses a unique compact  global attractor $\mathfrak{A}$ contained in $D(\mathbb{A})$. 
\end{Th}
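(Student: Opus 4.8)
The plan is to verify the three classical ingredients that guarantee the existence of a global attractor for a semigroup on a complete metric space (see, e.g., \cite[Ch.~I]{Teman} and \cite{Lady}): (i) $S(t)$ is a strongly continuous semigroup on $\mathcal{H}$; (ii) $S(t)$ admits a bounded absorbing set; (iii) $S(t)$ splits as $S(t)=N(t)+L(t)$, where $L(t)\to 0$ as $t\to\infty$ uniformly on bounded sets while $\bigcup_{t\ge t_0}N(t)$ maps the absorbing ball into a relatively compact subset of $\mathcal{H}$. All three are essentially available from the results proved above, and the crux is point (iii), which is precisely where the hypothesis that $D(\mathbb{A})\hookrightarrow\mathcal{H}$ is compact enters.

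\textbf{Semigroup and absorbing set.} The semigroup property is immediate from the autonomy of the equation and the uniqueness statement in Theorem \ref{Lipschitz}, while continuity of $U_0\mapsto S(t)U_0$, locally uniform in $t$, follows from the Gronwall/contraction estimate \eqref{idd2} together with the a priori bound \eqref{BoundIn}. For the absorbing set I would invoke Corollary \ref{LipschitzhG}: by \eqref{absor}, for every $R>0$ there is $t_0(R)$ with $\|S(t)U_0\|_{\mathcal{H}}\le c_1e^{-\mu t}R+c_2\|F_0\|_{\mathcal{H}}$ whenever $t\ge t_0$ and $\|U_0\|_{\mathcal{H}}\le R$. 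Hence the closed ball $\mathcal{B}_0=\{W\in\mathcal{H}:\ \|W\|_{\mathcal{H}}\le 1+c_2\|F_0\|_{\mathcal{H}}\}$ is absorbing: for each bounded $B\subset\mathcal{H}$ there is $t_1(B)$ such that $S(t)B\subset\mathcal{B}_0$ for all $t\ge t_1(B)$.

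\textbf{Compactness via the decomposition.} The next step is to use the splitting $S(t)=N(t)+L(t)$ introduced before Lemma \ref{lemaW}. Theorem \ref{Lipschitz} gives $\|L(t)U_0\|_{\mathcal{H}}\le c_0e^{-\gamma t}\|U_0\|_{\mathcal{H}}$, so $L(t)\to 0$ as $t\to\infty$ uniformly on bounded sets; and Lemma \ref{lemaW}, applied orbit by orbit to initial data in $\mathcal{B}_0$, furnishes $t_0>0$ and a constant $c_\ast$ (depending only on $\|F_0\|_{\mathcal{H}}$ and the radius of $\mathcal{B}_0$) with $N(t)\mathcal{B}_0\subset\mathcal{K}:=\{V\in D(\mathbb{A}):\ \|V\|_{D(\mathbb{A})}\le c_\ast\}$ for all $t\ge t_0$. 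Since $D(\mathbb{A})\hookrightarrow\mathcal{H}$ is compact, $\mathcal{K}$ is relatively compact in $\mathcal{H}$, hence so is $\bigcup_{t\ge t_0}N(t)\mathcal{B}_0$. The classical attractor theorem for semigroups possessing a uniformly compact part plus a uniformly decaying part (see \cite[Ch.~I]{Teman}) then yields that $\mathfrak{A}=\omega(\mathcal{B}_0)=\bigcap_{s\ge 0}\overline{\bigcup_{t\ge s}S(t)\mathcal{B}_0}$ is a nonempty, compact, fully invariant set attracting every bounded subset of $\mathcal{H}$, and uniqueness follows from invariance together with the attraction property.

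\textbf{Locating $\mathfrak{A}$ in $D(\mathbb{A})$.} Fix $t\ge t_0$. By full invariance $\mathfrak{A}=S(t)\mathfrak{A}$, and since $\mathfrak{A}\subset\mathcal{B}_0$ one gets $\mathfrak{A}=S(t)\mathfrak{A}\subset L(t)\mathcal{B}_0+N(t)\mathcal{B}_0\subset \overline{B_{\rho(t)}(0)}+\mathcal{K}$, with $\rho(t)=c_0e^{-\gamma t}\bigl(1+c_2\|F_0\|_{\mathcal{H}}\bigr)\to 0$. The sum of a closed ball and the compact set $\mathcal{K}$ is closed, and $\bigcap_{t\ge t_0}\bigl(\overline{B_{\rho(t)}(0)}+\mathcal{K}\bigr)=\mathcal{K}$ because $\rho(t)\to 0$; hence $\mathfrak{A}\subset\mathcal{K}\subset D(\mathbb{A})$, and $\mathfrak{A}$ is in fact bounded in $D(\mathbb{A})$. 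The step I expect to be the main obstacle is ensuring that the $D(\mathbb{A})$-bound of Lemma \ref{lemaW} is uniform both over $\mathcal{B}_0$ and over all $t\ge t_0$, so that $N(t)\mathcal{B}_0$ lands in the single precompact set $\mathcal{K}$; one must also keep in mind that $N(t)$ is nonlinear (it equals $U-W$ with $U$, $W$ solutions of nonlinear problems), so the estimate has to be applied along each trajectory rather than derived from any linearity of $N(t)$. Everything else is the standard attractor machinery.
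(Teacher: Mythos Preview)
Your proposal is correct and follows essentially the same approach as the paper: both use Corollary \ref{LipschitzhG} to obtain a bounded absorbing set, the decomposition $S(t)=L(t)+N(t)$ together with Lemma \ref{lemaW} to get a uniform $D(\mathbb{A})$-bound on $N(t)$, and then invoke the compact embedding $D(\mathbb{A})\hookrightarrow\mathcal{H}$ and standard attractor machinery. The paper's proof is considerably terser (three sentences), while you have spelled out the semigroup properties, the explicit radius of the absorbing ball, and the intersection argument locating $\mathfrak{A}$ inside $D(\mathbb{A})$; but the logical skeleton is identical.
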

\begin{proof}
Corollary \ref{LipschitzhG} provides the existence of a bounded absorving set $\mathbb{B}$, while Lemma 
 \ref{lemaW} show that $S(t)\mathbb{B}$ is exponentially attracted by a bounded set $\mathcal{C}\subset D(\mathbb{A})$. Hence $\mathcal{C}$ is a compact attracting set. By standard arguments of the theory of dynamical  systems (see \cite{Gatti,MR2271373,Hale,Teman}), we conclude that there exists a compact global attractor $\mathfrak{A}\subset \mathcal{C}$. 
\end{proof}

We conclude this section with the following useful characterization.
\begin{Th}\label{TNovo}  Let $S(t)=e^{{\mathbb{A}}t}$ be a
			$C_0$-semigroup of contractions on Banach space. Then, $S(t)$ is
			exponentially stable if and only if
			\begin{equation}\label{hyp}i\mathbb{R}\subset \varrho(\mathbb{A})
			\quad\text{and}\quad
			\omega_{ess}(S(t))<0,
			\end{equation}
where $\omega_{ess}(S(t))$ is the essential growth bound of the semigroup $S(t)$.
		\end{Th}
		\begin{proof}Here we use \cite[Corollary~2.11, p.~258]{EngelNagel} establishing that the type $\omega$ of the semigroup $e^{\mathbb{A}t}$ verifies
			\begin{equation}\label{idxx}
			\omega=\max\{\omega_{ess}, \omega_\sigma(\mathbb{A})\},
			\end{equation}
			where 
$\omega_\sigma(\mathbb{A})$ is the upper bound of the spectrum of $\mathbb{A}$. 	Moreover, for any $c>\omega_{ess}$, the set $\mathcal{I}_c:=\sigma(\mathbb{A})\cap\{\lambda\in \mathbb{C}:\;\; \mbox{Re}\lambda \geq c\}$ is  finite.

Let us suppose that \eqref{hyp} is valid.
Since the essential type of the semigroup $\omega_{ess}$ is negative, identity \eqref{idxx} states that the type of the semigroup will be negative provided $\omega_\sigma(\mathbb{A})<0$. 

If
$ \omega_\sigma (\mathbb {A}) \leq \omega_ {ess} $ then we have nothing to prove. Let us suppose that $ \omega_ \sigma (\mathbb {A})> \omega_ {ess} $.  From \eqref{hyp} and  Hille-Yosida Theorem  we have $ \overline{\mathbb {C} _ +} \subset \varrho (\mathbb {A}) $, hence
$ \omega_\sigma (\mathbb {A})\leq 0 $.  On the other hand 
$ \mathcal{I}_ {\omega_{ess}+\delta} $ is finite for $\delta>0$ verifying  $ \omega_ {ess} + \delta <0 $ and $ \omega_ {ess} + \delta <\omega_ \sigma (\mathbb {A}) $. Therefore  we have 
$$
\omega_ \sigma (\mathbb {A})=\sup \mbox{Re }\sigma(\mathbb{A})=\sup \mbox{Re } \mathcal{I}_ {\omega_{ess}+\delta}<0.
$$ 
Hence, the sufficient condition follows.

Reciprocally, let us suppose that the semigroup $S(t)$ is exponentially stable, in particular it goes to zero. Then, by   \cite[Theorem~1.1]{DuyBatt}  we have that $i\mathbb{R}\subset \varrho(\mathbb{A})$. Moreover, since the type $\omega$ verifies \eqref{idxx}, we have that
			$$
			\omega_{ess}\leq \max\{\omega_{ess}, \omega_\sigma(\mathbb{A})\}=\omega<0.
			$$
			Then, our conclusion follows.
\end{proof}
Note that the above characterization is valid for any Banach space.

\section{The  Hybrid Model}\label{sec2}
\setcounter{equation}{0}

In order to apply the semigroup theory to study the Signorini problem, we consider first the linear hybrid model, approaching the penalized problem, associated to \eqref{SignoriniProblem}--\eqref{Hip.Stress}. For details to pass from the Signorini problem to the penalized one, see, e.g., \cite{BRN}.  
\begin{eqnarray}\label{Tr1.1}
\begin{array}{ll}
 \rho_1\,\varphi_{tt} - \kappa \left(\varphi_x+\psi\right)_x+\gamma_1\delta(x-\xi)\varphi_t =0 & \text{in $I\times (0,+\infty)$,}
 \\
 \noalign{\medskip}
 \rho_2\,\psi_{tt} -b\,\psi_{xx} + \kappa \left(\varphi_x+\psi\right)+\gamma_2\delta(x-\xi)\psi_t =0 & \text{in $I\times (0,+\infty)$,}\\
  \noalign{\medskip}
  \epsilon \varphi_{tt}(\ell,t) +\epsilon \varphi_{t}(\ell,t) +\epsilon \varphi(\ell,t)+S(\ell,t)=0& \text{in $ (0,+\infty)$,}
\end{array}
\end{eqnarray}
satisfying the boundary conditions
\begin{eqnarray}\label{XXeqNs}
 \begin{array}{llll}
\varphi(0,t)=0,&  \psi_x(0,t)=0, &\psi(\ell,t)=0 & \text{in $(0,+\infty)$.}
 \end{array}
\end{eqnarray}
Note that $\varphi(\ell,t):=v(t)$ is determined by equation \eqref{Tr1.1}$_3$.
This dynamic boundary condition can be interpreted as a beam rigidly attached at the end $x=\ell$ to a tip body of mass $\epsilon$ that models a sealed container with a granular material, for example
sand. This granular material dampens the movement of the system by internal friction (for details see, e.g., \cite{Tip3,Tip2,Tip1}).

To formulate system \eqref{Tr1.1} within the semigroup framework, we transform the above system as a transmission problem. Indeed, let us denote by
 $I$ the open set
$$
I=(0,\xi)\cup (\xi,\ell) .
$$
\newcommand{\salto}[2]{[\![#1]\!]_{\xi_{#2}}}
Let us introduce the notation $\salto{f}{}$ means  the  jump of $f$ in $\xi$. That is 
$$
\salto{f}{}:=f(\xi^+)-f(\xi^-).
$$
Therefore,  it is easy to see that system \eqref{Tr1.1} is equivalent to
\begin{eqnarray}\label{NewTr1.1}
\begin{array}{ll}
 \rho_1\,\varphi_{tt} - \kappa \left(\varphi_x+\psi\right)_x =0 & \text{in $I\times (0,+\infty)$,}
 \\
 \noalign{\medskip}
 \rho_2\,\psi_{tt} -b\,\psi_{xx} + \kappa \left(\varphi_x+\psi\right) =0 & \text{in $I\times (0,+\infty)$,}\\
  \noalign{\medskip}
  \epsilon \varphi_{tt}(\ell,t) +\epsilon \varphi_{t}(\ell,t) +\epsilon \varphi(\ell,t)+S(\ell,t)=0& \text{in $ (0,+\infty)$,}
\end{array}
\end{eqnarray}
with 
\begin{eqnarray}\label{eqNs}
 \begin{array}{llll}
\varphi(0,t)=0,&  \psi_x(0,t)=0, &\psi(\ell,t)=0 & \text{in $(0,+\infty)$.}
 \end{array}
\end{eqnarray}

\begin{equation}\label{trxx}
\salto{\varphi}{}=\salto{\psi}{}=0,\quad \salto{k\varphi_x}{}=\gamma_1\varphi_t(\xi,t),\quad \salto{b\psi_x}{}=\gamma_2\psi_t(\xi,t)
\end{equation}

and the initial conditions
\begin{equation}\label{Tr1.2} 
\begin{array}{lc}
& \varphi(x,0)=\varphi_0(x), \quad
 \varphi_t(x,0)=\varphi_1(x), \quad
 \psi(x,0)=\psi_0(x), \quad
 \psi_t(x,0)=\psi_1(x),\\
 \noalign{\medskip}
 & v(0)=v_0,\quad v_t(0)=v_1,\quad
 \end{array}
\end{equation}
This physically admissible coupling  \eqref{trxx} represents the continuity of displacement and the discontinuity of force at $x=\xi$. We can observe that
if $\gamma_i=0$, $i=1,2$, then there is not energy dissipation at $x=\xi$ and the linkage at $x=\xi$ is conservative. Instead, if $\gamma_i>0$, $i=1,2$, then the linkage is dissipative, as the case under consideration.
Putting $\Phi=\varphi_t$, $\Psi=\psi_t$ and $V=v_t$, the phase space of our problem is
$$
\mathcal{H}=V_0 \times L^{2}(0,\ell)\times V_\ell \times L^{2}(0,\ell)\times \mathbb{C}^2,
$$
where
$$
V_0=\left\{w\in H^{1}(0,\ell):\;\; w(0)=0\right\} \quad \text{and}\quad V_\ell=\left\{w\in H^{1}(0,\ell):\;\; w(\ell)=0\right\},
$$
with the norm
\[
 \|(\varphi,\Phi,\psi,\Psi,v,V) \|^2_{\mathcal{H}}=\int_0^\ell
\left( \kappa|\varphi_x+\psi|^2+\rho_1|\Phi|^2
 +b|\psi_x|^2+\rho_2|\Psi|^2\right)\;dx+\epsilon|v|^2+\epsilon|V|^2.
\]
\subsection{The $C_0$ semigroup of contractions}
Denoted by $B^\top$ the transpose of a matrix $B$ and introducing the state vector
\[
 U(t)=\left(\varphi(t),\Phi(t),\psi(t),\Psi(t),v(t),V(t)\right)^\top:=\left(\mathcal{U},\mathcal{V}\right)^\top,
\]
where $\mathcal{U}=\left(\varphi(t),\Phi(t),\psi(t),\Psi(t)\right)^\top$, $\mathcal{V}=\left(v(t),V(t)\right)^\top$  the  transmission conditions are given by
\begin{equation}\label{wxx}
\salto{k\varphi_x}{}=\gamma_1\Phi(\xi)\qquad \text{and} \qquad \salto{b\psi_x}{}=\gamma_2\Psi(\xi),
\end{equation}
Hence, system \eqref{Tr1.1}--\eqref{Tr1.2} can be written as a linear ODE in $\mathcal{H}$ of the form
\begin{equation}\label{ODE}
 \frac{d}{dt}U(t)=\mathcal{A}\,U(t),
\end{equation}
where the domain $\mathcal{D}(\mathcal{A})$ of the linear operator $\mathcal{A}:D(\mathcal{A})\subset \mathcal{H} \to \mathcal{H}$ is given by
\begin{equation}\label{Domain}
 \mathcal{D}(\mathcal{A})=\left\{
 U\in \mathcal{H}: \;\varphi,\psi \in H^2(I),\;
 (\Phi,  \Psi)  \in V_0\times V_\ell,\;
\text{verifying  \eqref{wxx}}
 \right\},
\end{equation}
and
\begin{equation}\label{IGA}
 \mathcal{A}U=\begin{bmatrix}
 \Phi \\
 \noalign{\medskip}
 \displaystyle\frac{\kappa}{\rho_1}\left(\varphi_x+\psi\right)_x
 \\
 \noalign{\medskip}
 \Psi
 \\
 \noalign{\medskip}
 \displaystyle\frac{b}{\rho_2}\,\psi_{xx}-\frac{\kappa}{\rho_2}\left(\varphi_x+\psi\right)
  \\
 \noalign{\medskip}
V
 \\
 \noalign{\medskip}
-V-v-\frac 1\epsilon S(\ell,t)
 \end{bmatrix}.
\end{equation}
Straightforward  calculations shows that the operator 
 $\mathcal{A}$ is dissipative. Indeed, for every $U \in \mathcal{D}(\mathcal{A})$,
\begin{equation}\label{diss}
 \operatorname{Re}\langle \mathcal{A} U(t),U(t)\rangle_{\mathcal{H}}=
  -\gamma_1 |\Phi(\xi,t)|^2  -\gamma_2 |\Psi(\xi,t)|^2 -\epsilon|V(t)|^2 \leq 0.
\end{equation}
Considering the resolvent equation
\begin{equation}\label{Resolv}
i\lambda U-\mathcal{A}U=F,
\end{equation}
and taking inner product with $U$ over the phase space $\mathcal{H}$, we get
\begin{equation}\label{diss2}
 \gamma_1 |\Phi(\xi,t)|^2 +\gamma_2 |\Psi(\xi,t)|^2 +\epsilon|V(t)|^2=\operatorname{Re}\langle  U(t),F(t)\rangle_{\mathcal{H}}.
\end{equation}

Using standard procedures we can show that $0\in\varrho(\mathcal{A})$. According to 
Lummer-Phillips Theorem \cite[Theorem~1.2.4 ]{s4LC99}  the operator $\mathcal{A}$ is the infinitesimal generator of a contraction semigroup $
\mathcal{T}(t):=e^{t\mathcal{A}}: \mathcal{H} \to \mathcal{H}
$. See also \cite[Theorem~1.4.3]{pazy}.
So we have 
\begin{Th}
For any ${U}_0\in \mathcal{H}$ there exists a unique mild solution
\begin{equation}\label{semi}
 U(t)=\left(\varphi(t),\varphi_t(t),\psi(t),\psi_t(t),v(t),V(t)\right)^\top=\mathcal{T}(t)\,U_0.
\end{equation}
to problem \eqref{Tr1.1}. Moreover if the initial data
$U_0\in D(\mathcal{A})$ there exists a strong solution satisfying
\[U \in C^1(0,T;\mathcal{H})\cap C(0,T;D(\mathcal{A})).
\] 
\end{Th}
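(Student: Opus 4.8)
The statement is a direct application of the Lumer--Phillips theorem followed by the standard regularity theory of $C_0$-semigroups, so the proof reduces to checking the hypotheses for the operator $\mathcal{A}$ of \eqref{Domain}--\eqref{IGA}. Dissipativity is already recorded in \eqref{diss}. The one substantive point — mentioned above as ``$0\in\varrho(\mathcal{A})$'' — is the range condition: for every $F=(f_1,\dots,f_6)^{\top}\in\mathcal{H}$ one must produce a unique $U=(\varphi,\Phi,\psi,\Psi,v,V)^{\top}\in D(\mathcal{A})$ solving $\lambda U-\mathcal{A}U=F$, both for some $\lambda>0$ (the Lumer--Phillips surjectivity) and for $\lambda=0$ (to obtain $0\in\varrho(\mathcal{A})$, which is also invoked in the later sections).

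The plan is to reduce $\lambda U-\mathcal{A}U=F$ to a stationary elliptic transmission problem solvable by Lax--Milgram. The first, third and fifth rows of \eqref{IGA} give $\Phi=\lambda\varphi-f_1\in V_0$, $\Psi=\lambda\psi-f_3\in V_\ell$ and $V=\lambda v-f_5$; the last row then expresses $v$ (hence $V$) through $S(\ell)$, and the identification $\varphi(\ell)=v$ ties this to the trace of $\varphi$. Substituting back, the second and fourth rows become a coupled second-order system for $(\varphi,\psi)$ on $I$, to be solved in $V_0\times V_\ell$ together with $\psi_x(0)=0$, the transmission conditions \eqref{wxx}, and the dynamic relation at $x=\ell$. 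I would pose this weakly on $V_0\times V_\ell$ through the sesquilinear form (written here for $\lambda=0$; for $\lambda>0$ one adds the obvious nonnegative lower-order terms $\int_0^\ell(\lambda^2\rho_1\varphi\overline{\widetilde\varphi}+\lambda^2\rho_2\psi\overline{\widetilde\psi})\,dx$ and a larger positive boundary coefficient)
\[
a\big((\varphi,\psi),(\widetilde\varphi,\widetilde\psi)\big)=\int_0^\ell\big(\kappa(\varphi_x+\psi)\,\overline{(\widetilde\varphi_x+\widetilde\psi)}+b\,\psi_x\,\overline{\widetilde\psi_x}\big)\,dx+\epsilon\,\varphi(\ell)\,\overline{\widetilde\varphi(\ell)},
\]
against a bounded antilinear functional collecting the data and the jump contributions $\gamma_1\Phi(\xi),\gamma_2\Psi(\xi)$ coming from \eqref{wxx}. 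Boundedness of $a$ is clear; coercivity on $V_0\times V_\ell$ follows from the Timoshenko--Poincaré estimate that bounds $\|\varphi_x\|^2+\|\psi_x\|^2$ by a constant times $\|\varphi_x+\psi\|^2+\|\psi_x\|^2$ on $V_0\times V_\ell$ (using $\varphi(0)=0$, $\psi(\ell)=0$). Lax--Milgram then yields a unique weak solution; testing with functions supported in $(0,\xi)$ and in $(\xi,\ell)$ gives $\varphi,\psi\in H^2(I)$ by interior elliptic regularity, and integrating by parts on each subinterval recovers $\psi_x(0)=0$, the transmission conditions \eqref{wxx}, and the dynamic boundary condition at $x=\ell$ as natural conditions. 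Hence $U\in D(\mathcal{A})$ with $\lambda U-\mathcal{A}U=F$, and uniqueness follows from coercivity.

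With $\mathcal{A}$ dissipative and $R(\lambda I-\mathcal{A})=\mathcal{H}$ for some $\lambda>0$, the Lumer--Phillips theorem \cite[Theorem~1.2.4]{s4LC99} (see also \cite[Theorem~1.4.3]{pazy}) shows that $\mathcal{A}$ generates a $C_0$-semigroup of contractions $\mathcal{T}(t)=e^{t\mathcal{A}}$ on $\mathcal{H}$. Consequently, for every $U_0\in\mathcal{H}$ the function $U(t)=\mathcal{T}(t)U_0$ is the unique mild solution of \eqref{ODE}, i.e. of \eqref{Tr1.1}; and for $U_0\in D(\mathcal{A})$ the standard regularity result for semigroup generators (see, e.g., \cite{pazy}) gives $U\in C^1(0,T;\mathcal{H})\cap C(0,T;D(\mathcal{A}))$, so $U$ is a strong solution, uniqueness being inherited from that of the semigroup orbit.

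The main obstacle is the range condition, and within it the coercivity of $a$ together with the correct bookkeeping of the boundary and interface terms: one must choose the form so that the jumps at $\xi$ dictated by \eqref{wxx} and the dynamic term at $\ell$ reappear as natural conditions, and must treat the identification $\varphi(\ell)=v$ so that it is absorbed into the functional setting rather than imposed as an extra constraint that would spoil coercivity. The remaining steps — reading off $\Phi,\Psi,V$ from the odd rows, the interior elliptic regularity on each subinterval, and the semigroup bookkeeping — are routine.
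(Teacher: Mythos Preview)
Your proposal is correct and follows essentially the same route as the paper: the paper simply records the dissipativity \eqref{diss}, asserts that ``using standard procedures we can show that $0\in\varrho(\mathcal{A})$,'' invokes Lumer--Phillips, and ends with $\square$. You have supplied precisely those ``standard procedures'' --- the reduction of the resolvent equation to a stationary Timoshenko transmission system, the Lax--Milgram argument with the Korn/Poincar\'e-type coercivity on $V_0\times V_\ell$, and the recovery of the interface and dynamic boundary conditions as natural conditions --- so your argument is a fleshed-out version of the paper's own proof rather than a different one.
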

\hfill $\square$

\subsection{Comparison between the hybrid and
the non hybrid model}
Let us  make a comparison between the hybrid and
the non hybrid Timoshenko model given by

\begin{eqnarray}\label{TTr1.1}
\begin{array}{ll}
 \rho_1\,\varphi_{tt} - \kappa \left(\varphi_x+\psi\right)_x=0 & \text{in $I\times (0,+\infty)$,}
 \\
 \noalign{\medskip}
 \rho_2\,\psi_{tt} -b\,\psi_{xx} + \kappa \left(\varphi_x+\psi\right)=0 & \text{in $I\times (0,+\infty)$,}\\
  \noalign{\medskip}
\end{array}
\end{eqnarray}
satisfying the boundary conditions
\begin{eqnarray}\label{T1eqNs}
 {\varphi}(0,t)= {S}(\ell,t)=0,\quad   {\psi}_x(0,t)= {\psi}(\ell,t)=0,\quad \text{in $(0,+\infty)$},
\end{eqnarray}
where $S=\kappa(\varphi_x+\psi)$ and $M=b\psi_x$. 
Here Theorem \ref{TNovo}  will play an important role.

Let us denote the infinitesimal generator of  system \eqref{TTr1.1}--\eqref{T1eqNs} by $\mathcal{A}_{T}$ where
\begin{equation}\label{IGA-T}
 \mathcal{A}_{T}\,\mathcal{U}=\begin{bmatrix}
 \Phi \\
 \noalign{\medskip}
 \displaystyle\frac{1}{\rho_1}S_{x}
 \\
 \noalign{\medskip}
 \Psi
 \\
 \noalign{\medskip}
 \displaystyle\frac{1}{\rho_2}\,M_{x}-\frac{1}{\rho_2}S
 \end{bmatrix}.
\end{equation}
The phase space we consider for the above model is
\begin{equation}\label{Phase1}
\mathbf{H}=V_0 \times L^{2}(0,\ell)\times V_\ell \times L^{2}(0,\ell),
\end{equation}
Hence the domain $\mathcal{D}(\mathcal{A}_{T})$ of the linear operator $\mathcal{A}_{T}:D(\mathcal{A}_{T})\subset \mathbf{H} \to \mathbf{H}$ is given by
\[
 \mathcal{D}(\mathcal{A}_{T})=\left\{
 \mathcal{U}\in \mathbf{H}: \;\varphi,\psi \in H^2(I),\;
 (\Phi,  \Psi)  \in V_0\times V_\ell,\;
\text{verifying \eqref{trxx}  and  \eqref{T1eqNs}}
 \right\}.
\]
Similarly as the hybrid model, we have
\begin{equation}\label{visco2}
 \operatorname{Re}\langle \mathcal{A}_{T} U,U\rangle_{\mathbf{H}}=
   -\gamma_1 |\Phi(\xi,t)|^2  -\gamma_2 |\Psi(\xi,t)|^2 \leq 0,
\end{equation}
with $\gamma_1,\gamma_2> 0$.
Under this notations we get that system \eqref{TTr1.1}--\eqref{T1eqNs} can be written as
\begin{equation}\label{ODE-T}
 \frac{d}{dt}\mathcal{U}(t)=\mathcal{A}_{T}\,\mathcal{U}(t).
\end{equation}

Let us denote by $\mathbf{T}(t)=e^{t\mathcal{A}_{T}}$ the semigroup associated to system \eqref{TTr1.1}--\eqref{T1eqNs}.

The primary objective of this section is to demonstrate that the semigroup \(\mathbf{T}\) is exponentially stable if and only if the semigroup \(\mathcal{T}\), associated with the hybrid model, is also exponentially stable. This result indicates that the dissipation introduced by the ordinary differential equation (ODE) in \eqref{Tr1.1}$_3$, part of the hybrid model, does not significantly affect the stability. Consequently, the exponential stability of the system persists as \(\epsilon \to 0\).

Let us introduce the space
\begin{equation}\label{Phase2}
\widetilde{\mathbf{H}}=\mathbf{H}\times \{0\}\times \{0\},
\end{equation}
intended as the extended phase space.
Let us denote by $\Pi$ the projection of $\mathcal{H}$ onto $\widetilde{\mathbf{H}}$:
\begin{equation}\label{Phase3}
\Pi(\varphi,\Phi,\psi,\Psi,v,V) =(\varphi,\Phi,\psi,\Psi,0,0).
\end{equation}
Let us decompose the infinitesimal generator $\mathcal{A}$ in the following way
\begin{equation}\label{DefAT}
\mathcal{A}
:=
\begin{pmatrix}
\mathcal{A}_{T}&\mathbf{0}_{4\times 2}\\
B&K
\end{pmatrix},
\end{equation}
with
$$
B=\begin{pmatrix}
0&0&0&0\\
\frac{\kappa}{\epsilon}\boldsymbol{\gamma}_1&0&0&0
\end{pmatrix},\quad
K=\begin{pmatrix}
0&I\\
-I&-I
\end{pmatrix},
$$
where $\boldsymbol{\gamma}_1\varphi=\varphi_x(\ell)$.  Hence, recalling that $U:=(\mathcal{U}, \mathcal{V})^\top$, where $\mathcal{U}:=(\varphi,\Phi,\psi,\Psi)$ and $\mathcal{V}:=(v,V)$, we get
$$
\mathcal{A}U=\begin{pmatrix}
\mathcal{A}_{T}\mathcal{U}\\
B\mathcal{U}+K\mathcal{V}
\end{pmatrix}=\begin{pmatrix}
\mathcal{A}_{T}\mathcal{U}\\
0
\end{pmatrix}+\begin{pmatrix}
0\\
K\mathcal{V}
\end{pmatrix}+\begin{pmatrix}
\mathbf{0}\\
B\mathcal{U}
\end{pmatrix},\quad \forall U\in D(\mathcal{A}_i).
$$
Under the above conditions we can state the following Lemma:
 \begin{Le}\label{lemma-T00}
The difference $\mathcal{T}(t)-\mathbf{T}(t)\Pi$ is a compact operator over $\mathcal{H}$. Hence the corresponding essential types $\omega_{\text{ess}}(\mathcal{T})$ and $\omega_{\text{ess}}(\mathbf{T}(t)\Pi)$  are equal.
\end{Le}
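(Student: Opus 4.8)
The plan is to prove the stronger statement that, for every fixed $t\ge 0$, the operator $\mathcal{R}(t):=\mathcal{T}(t)-\mathbf{T}(t)\Pi$ is compact on $\mathcal{H}$; the claim on the essential types is then a formality. Indeed, the essential norm $\|\cdot\|_{\mathrm{ess}}$ (the distance in $\mathcal{L}(\mathcal{H})$ to the ideal of compact operators) is unchanged under addition of a compact operator, so $\|\mathcal{T}(t)\|_{\mathrm{ess}}=\|\mathbf{T}(t)\Pi\|_{\mathrm{ess}}$ for all $t$, and since $t\mapsto\log\|\mathcal{T}(t)\|_{\mathrm{ess}}$ is subadditive one gets $\omega_{\mathrm{ess}}(\mathcal{T})=\lim_{t\to\infty}t^{-1}\log\|\mathcal{T}(t)\|_{\mathrm{ess}}=\lim_{t\to\infty}t^{-1}\log\|\mathbf{T}(t)\Pi\|_{\mathrm{ess}}=\omega_{\mathrm{ess}}(\mathbf{T}(t)\Pi)$.

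For the compactness I would start from the block form \eqref{DefAT} and compare $\mathcal{T}(t)$ with the decoupled semigroup $\mathcal{S}(t)$ generated by the block-diagonal operator $\mathrm{diag}(\mathcal{A}_T,K)$, that is $\mathcal{S}(t)=\bigl(\mathbf{T}(t),\,e^{tK}\bigr)$ acting componentwise on $\mathbf{H}\times\mathbb{C}^2$. Two of the three resulting pieces are immediately harmless: the block $K$ lives on the finite-dimensional factor $\mathbb{C}^2$, so $e^{tK}$ has finite rank, whence $\mathcal{S}(t)-\mathbf{T}(t)\Pi=\mathrm{diag}(0,e^{tK})$ has finite rank; and the coupling $\mathcal{P}:=\mathcal{A}-\mathrm{diag}(\mathcal{A}_T,K)$ feeds into the ODE block \emph{only} the scalar shear trace $S(\ell,t)=\kappa\,\varphi_x(\ell,t)$, so $\mathcal{P}$ has one-dimensional range. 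Writing $\mathcal{T}(t)$ by Duhamel's formula against $\mathcal{S}(t)$, the correction $\int_0^t\mathcal{S}(t-s)\,\mathcal{P}\,\mathcal{T}(s)\,ds$ therefore takes values in the fixed two-dimensional subspace $\{0\}\times\mathbb{C}^2$ of $\mathcal{H}$ and is a finite-rank operator. Adding the three contributions shows $\mathcal{R}(t)$ is compact.

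The step requiring care is the Duhamel comparison itself, since the coupling is unbounded on $\mathcal{H}$ — it involves the boundary trace of $\varphi_x$ — and it also changes the boundary condition at $x=\ell$ (the dynamic condition $\varphi(\ell,t)=v(t)$ of \eqref{Tr1.1} versus $S(\ell,t)=0$ of \eqref{TTr1.1}), so $D(\mathcal{A})$ is not contained in the domain of $\mathrm{diag}(\mathcal{A}_T,K)$. I would run the argument first on the dense set $D(\mathcal{A})$, where $\mathcal{T}(s)U_0$ is a \emph{strong} solution and the traces $\varphi(\ell,\cdot)$, $\varphi_x(\ell,\cdot)$ are honest functions, so the variation-of-constants identity is legitimate; the corrective term is then a time-convolution of $\mathcal{S}$ against the scalar $c_{U_0}(s)=\tfrac{\kappa}{\epsilon}\varphi_x(\ell,s)$ times a fixed smooth vector of $\mathcal{H}$, hence controlled in $\mathcal{H}$ by $\|c_{U_0}\|_{L^1(0,t)}$. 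The decisive input is the observability (hidden-regularity) estimate for the Timoshenko system obtained in this section, $\|\varphi_x(\ell,\cdot)\|_{L^2(0,t)}\le C_t\,\|U_0\|_{\mathcal{H}}$, which upgrades the a priori graph-norm bound on $c_{U_0}$ to a bound by $\|U_0\|_{\mathcal{H}}$ alone. Thus $\mathcal{R}(t)$ is a bounded operator on $\mathcal{H}$ that, on the dense set $D(\mathcal{A})$, already agrees with the finite-rank/compact expression above, so it extends to a compact operator on all of $\mathcal{H}$.

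The main obstacle I anticipate is exactly this interplay between the unboundedness of the boundary coupling and the mismatch of the two domains: the naive bounded-perturbation form of Duhamel does not apply, and one must lean on the section's trace estimate (and, to keep everything inside $D(\mathcal{A})$, on a fixed lifting of the dynamic datum $v(t)$ that turns the change of boundary condition into an interior forcing) in order to pass from "bounded" to "compact". Once that analytic point is settled, the block algebra and the essential-norm argument are routine.
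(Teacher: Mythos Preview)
Your approach is essentially the paper's: exploit the lower-triangular block structure \eqref{DefAT} so that $\mathcal{T}(t)-\mathbf{T}(t)\Pi$ has range in the finite-dimensional factor $\{0\}\times\mathbb{C}^2$ and is therefore finite rank. The paper simply writes $\mathcal{U}=e^{t\mathcal{A}_T}\mathcal{U}_0$, solves the ODE block by Duhamel with forcing $B\mathcal{U}$, and reads off compactness from \eqref{eeqq1} (invoking $\mathfrak{G}\in H^1(0,T)$, though finite-dimensional range already suffices once boundedness is known). It does not address the domain mismatch you correctly flag---the dynamic constraint $\Phi(\ell)=V$ in $D(\mathcal{A})$ versus $S(\ell)=0$ in $D(\mathcal{A}_T)$---so the bare identification $\mathcal{U}=e^{t\mathcal{A}_T}\mathcal{U}_0$ is asserted rather than justified. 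Your proposed cure via a lifting of the boundary datum together with the hidden-regularity trace estimate of Lemma~\ref{observability} is a legitimate way to make that step rigorous, but it is your own addition; the paper's proof is content to pass over the issue.
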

\begin{proof}
Note that the solution of $U_t-\mathcal{A}U=0$, $U(0)=U_0$ can be written as
$$
\begin{pmatrix}
\mathcal{U}\\
\mathcal{V}
\end{pmatrix}_t=\begin{pmatrix}
\mathcal{A}_{T}\mathcal{U}\\
0
\end{pmatrix}+\begin{pmatrix}
0\\
K\mathcal{V}
\end{pmatrix}+\begin{pmatrix}
\mathbf{0}\\
B\mathcal{U}
\end{pmatrix},
$$
with $ U_0=(\mathcal{U}_0, \mathcal{V}_0)^\top$ which implies that
$$
\mathcal{U} =e^{t\mathcal{A}_{T}}\mathcal{U}_0,\quad \text{and}\quad\mathcal{V} =e^{tK}\mathcal{V}_0+\int_0^te^{(t-s)K}B\mathcal{U}(s)\;ds.
$$
Therefore
\begin{equation}\label{eeqq1} 
U(t)-\begin{pmatrix}
e^{t\mathcal{A}_{T}}\mathcal{U}_0\\
0
\end{pmatrix}=\begin{pmatrix}
\mathbf{0}\\
e^{tK}\mathcal{V}_0+\int_0^te^{(t-s)K}B\mathcal{U}(s)\;ds.
\end{pmatrix}.
\end{equation}
Since 
$$
\mathfrak{G}(t)=\int_0^te^{(t-s)K}B\mathcal{U}(s)\;ds\in H^1(0,T),
$$
we conclude that the right hand side of \eqref{eeqq1} is a compact operator, 
therefore
$$
\left[\mathcal{T}(t)-\mathbf{T}(t)\Pi\right]
$$
is a compact operator. Hence, we arrive at the conclusion.
\end{proof}

 \begin{Th}\label{T00}
The semigroup $\mathcal{T}(t)$ is exponentially stable if and only if  $ \mathbf{T}(t)$ is exponentially stable. \end{Th}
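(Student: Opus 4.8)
The plan is to combine Theorem~\ref{TNovo} with Lemma~\ref{lemma-T00}. Applying Theorem~\ref{TNovo} to $\mathcal{A}$ on $\mathcal{H}$ and to $\mathcal{A}_{T}$ on $\mathbf{H}$, exponential stability of $\mathcal{T}$ is equivalent to $i\mathbb{R}\subset\varrho(\mathcal{A})$ together with $\omega_{ess}(\mathcal{T})<0$, and exponential stability of $\mathbf{T}$ is equivalent to $i\mathbb{R}\subset\varrho(\mathcal{A}_{T})$ together with $\omega_{ess}(\mathbf{T})<0$. Hence it suffices to prove two facts: (i) $\omega_{ess}(\mathcal{T})=\omega_{ess}(\mathbf{T})$; and (ii) $i\mathbb{R}\subset\varrho(\mathcal{A})\Longleftrightarrow i\mathbb{R}\subset\varrho(\mathcal{A}_{T})$.

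Fact (i) is a direct consequence of Lemma~\ref{lemma-T00}: it gives $\omega_{ess}(\mathcal{T})=\omega_{ess}(\mathbf{T}(t)\Pi)$, and since $\Pi$ is the norm-one projection of $\mathcal{H}$ onto $\widetilde{\mathbf{H}}\cong\mathbf{H}$, on which $\mathbf{T}(t)$ acts as the Timoshenko semigroup while vanishing on the two-dimensional complement, comparing the measures of non-compactness of $\mathbf{T}(t)\Pi$ on $\mathcal{H}$ and of $\mathbf{T}(t)$ on $\mathbf{H}$ yields $\omega_{ess}(\mathbf{T}(t)\Pi)=\omega_{ess}(\mathbf{T})$. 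After (i) the conditions $\omega_{ess}<0$ hold or fail simultaneously, so we may assume $\omega_{ess}(\mathcal{T})=\omega_{ess}(\mathbf{T})<0$.

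For (ii), observe first that, since $\omega_{ess}<0$, the finiteness of the set $\mathcal{I}_c$ from the proof of Theorem~\ref{TNovo} shows that any point of $\sigma(\mathcal{A})$ or $\sigma(\mathcal{A}_{T})$ on $i\mathbb{R}$ is an isolated eigenvalue of finite algebraic multiplicity; thus (ii) is equivalent to $\sigma_p(\mathcal{A})\cap i\mathbb{R}=\sigma_p(\mathcal{A}_{T})\cap i\mathbb{R}$. Now $0\in\varrho(\mathcal{A})$ was shown above and, by the analogous argument, $0\in\varrho(\mathcal{A}_{T})$, so fix $\lambda\in\mathbb{R}\setminus\{0\}$. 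One inclusion is straightforward: if $\mathcal{A}U_0=i\lambda U_0$ with $U_0=(\mathcal{U}_0,v_0,V_0)\neq0$, then \eqref{diss} read on the imaginary axis forces $\Phi_0(\xi)=\Psi_0(\xi)=0$ and $V_0=0$; the fifth and sixth scalar equations of $\mathcal{A}U_0=i\lambda U_0$ (see \eqref{IGA}) then give $v_0=0$ and $S_0(\ell)=0$, so $\mathcal{U}_0\in D(\mathcal{A}_{T})$, $\mathcal{U}_0\neq0$ and $\mathcal{A}_{T}\mathcal{U}_0=i\lambda\mathcal{U}_0$. Together with $0\in\varrho(\mathcal{A})\cap\varrho(\mathcal{A}_{T})$ this gives $\sigma_p(\mathcal{A})\cap i\mathbb{R}\subset\sigma_p(\mathcal{A}_{T})\cap i\mathbb{R}$, i.e. the implication ``$\mathbf{T}$ exponentially stable $\Rightarrow$ $\mathcal{T}$ exponentially stable''.

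The reverse inclusion is the crux and the part I expect to be hardest. Starting from $\mathcal{A}_{T}\mathcal{U}_0=i\lambda\mathcal{U}_0$ with $\mathcal{U}_0\neq0$, one has $\Phi_0(\xi)=\Psi_0(\xi)=0$ by \eqref{visco2} and $S_0(\ell)=0$ since $\mathcal{U}_0\in D(\mathcal{A}_{T})$; to promote $\mathcal{U}_0$ to an eigenvector $(\mathcal{U}_0,v_0,V_0)$ of $\mathcal{A}$ the constraints $v=\varphi(\ell)$, $V=\Phi(\ell)$ force $v_0=\varphi_0(\ell)$, $V_0=i\lambda\varphi_0(\ell)$, and the coupling equations then reduce to $\varphi_0(\ell)\,\big(1+i\lambda-\lambda^2\big)=0$; as the matrix $K$ of \eqref{DefAT} has eigenvalues with real part $-\tfrac12$, the factor $1+i\lambda-\lambda^2$ never vanishes for real $\lambda$, so the construction succeeds precisely when $\varphi_0(\ell)=0$. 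Everything therefore comes down to showing that an imaginary-axis eigenfunction of $\mathcal{A}_{T}$ necessarily satisfies $\varphi_0(\ell)=0$ — equivalently, that the obstructions to $i\mathbb{R}\subset\varrho(\mathcal{A}_{T})$ and to $i\mathbb{R}\subset\varrho(\mathcal{A})$ coincide. This is precisely where the dissipative structure of the Timoshenko model enters: by \eqref{visco2} such an eigenfunction has $\varphi_t(\xi)=\psi_t(\xi)=0$, so its transmission jumps are trivial and it is ``invisible'' to the pointwise damping, and the observability estimate for the Timoshenko system that this section is devoted to then pins down its behaviour, in particular its trace at $x=\ell$. Once this is established, (ii) follows, and with (i) the theorem is proved; the handling of $\lambda=0$ and of the phase-space constraints is routine.
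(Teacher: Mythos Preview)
Your overall architecture is exactly that of the paper: use Lemma~\ref{lemma-T00} to equate the essential types, then invoke Theorem~\ref{TNovo} so that everything reduces to comparing imaginary eigenvalues of $\mathcal{A}$ and $\mathcal{A}_T$. Your treatment of the direction ``$\mathbf{T}$ exponentially stable $\Rightarrow$ $\mathcal{T}$ exponentially stable'' (eigenvector of $\mathcal{A}$ yields eigenvector of $\mathcal{A}_T$ via \eqref{diss}) is the same as the paper's and is fine.

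The gap is in the converse direction. You impose the constraints $v=\varphi(\ell)$, $V=\Phi(\ell)$ when lifting an eigenvector $W$ of $\mathcal{A}_T$ to $\mathcal{H}$, which forces you to prove $\varphi_0(\ell)=0$; you then appeal vaguely to ``the observability estimate'' without actually deriving this vanishing, and indeed Lemma~\ref{observability} does not yield it. But the constraints you impose are \emph{not} part of $D(\mathcal{A})$ as defined in \eqref{Domain}: the pair $(v,V)$ lives freely in $\mathbb{C}^2$. The paper therefore simply takes $\widetilde{W}=(W,0,0)$. This lies in $D(\mathcal{A})$, and since $W\in D(\mathcal{A}_T)$ already satisfies $S(\ell)=0$ from \eqref{T1eqNs}, the sixth component of $\mathcal{A}\widetilde{W}$ is $-0-0-\tfrac{1}{\epsilon}S(\ell)=0$, giving $\mathcal{A}\widetilde{W}=(\mathcal{A}_T W,0,0)=i\lambda\widetilde{W}$ immediately. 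No observability is needed; the obstruction you identified (the nonvanishing of $1+i\lambda-\lambda^2$) never arises because you are free to set $v_0=V_0=0$ rather than $v_0=\varphi_0(\ell)$.
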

\begin{proof}
Because of Lemma \ref{lemma-T00} to show the exponential stability of   $\mathcal{T}(t)$ or  $ \mathbf{T}(t)$ thanks to Theorem \ref{TNovo} it is enough to show that the imaginary axes is contained over the resolvent set of the corresponding infinitesimal generator. Let us suppose that  $\mathcal{T}(t)$ is exponentially stable and suppose that  $ \mathbf{T}(t)$ is not exponentially stable. So this implies that $i\mathbb{R}\not\subset \varrho(\mathcal{A}_T)$, resolvent of $\mathcal{A}_T$. Therefore,  there exists an eigenvector $0\not = W\in D(\mathcal{A}_T)$ and $0\not = \lambda \in\mathbb{R}$ such that 
$$ 
\mathcal{A}_TW=i\lambda W.
$$
Taking $\widetilde{W}=(W,0,0)$ we conclude that $\widetilde{W}$ is also an eigenvector of $\mathcal{A}$, that is  $\mathcal{A}\widetilde{W}=i\lambda \widetilde{W}$ but this is contradictory to our hypothesis.  

Let us suppose now that $ \mathbf{T}(t)$ is exponentially stable and suppose that $ \mathcal{T}(t)$ is not. This implies that there exists an eigenvector $\widetilde{W}\not = 0$ and $0\not = \lambda \in\mathbb{R}$ such that  such that 
$$
\mathcal{A}\widetilde{W}=i\lambda\widetilde{W}.
$$
Multiplying by $\widetilde{W}$ we get 
$$
i\lambda(\widetilde{W},\widetilde{W})_{\mathcal{H}}- (\mathcal{A}\widetilde{W},\widetilde{W} )_{\mathcal{H}}=0.
$$
Taking the real part and using \eqref{diss}, we conclude that $\widetilde{W}=(\widetilde{\varphi},\widetilde{\Phi},\widetilde{\psi},\widetilde{\Psi},v,V)$ satisfies 
\begin{equation}\label{dissx3}
 \gamma_1 |\Phi(\xi,t)|^2 +\gamma_2 |\Psi(\xi,t)|^2 +\epsilon|V(t)|^2=0.
\end{equation}
This, in particular, implies that $v=V=0$. Hence, the vector ${W}=(\widetilde{\varphi},\widetilde{\Phi},\widetilde{\psi},\widetilde{\Psi})$ is an eigenvector of $\mathcal{A}_T$ which is a contradiction. So, our conclusion follows. 
\end{proof}

\noindent 
Now we are in conditions to establish the exponential stability. 
\begin{Th}\label{esstipe} Let us suppose that $\xi\in \mathbb{Q}\ell$ such that $\xi\ne\frac{2n}{2m+1}\ell$,   $\forall n,m\in \mathbb{N}$, with 
$2n$, $2m+1$ co-prime numbers then  
 the semigroup $\mathcal{T}(t)=e^{\mathcal{A}t}$ associated to system \eqref{NewTr1.1}-\eqref{Tr1.2} 
is exponentially stable .
\end{Th}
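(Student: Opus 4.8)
The strategy is to reduce everything to the non‑hybrid system. By Theorem~\ref{T00}, $\mathcal{T}(t)$ is exponentially stable if and only if $\mathbf{T}(t)=e^{t\mathcal{A}_T}$ is, so it suffices to prove exponential stability of $\mathbf{T}(t)$, and for this I would verify the two conditions of the characterization in Theorem~\ref{TNovo}: that $i\mathbb{R}\subset\varrho(\mathcal{A}_T)$ and that $\omega_{ess}(\mathbf{T}(t))<0$. I would first note that $\mathcal{D}(\mathcal{A}_T)$ embeds compactly into $\mathbf{H}$ (since $\varphi,\psi\in H^2(I)$ and $\Phi,\Psi\in H^1(0,\ell)$, while the phase space only requires $H^1$, resp.\ $L^2$, membership), so $\mathcal{A}_T$ has compact resolvent; hence $\sigma(\mathcal{A}_T)$ is a discrete set of eigenvalues of finite multiplicity, and the requirement $i\mathbb{R}\subset\varrho(\mathcal{A}_T)$ amounts to the absence of purely imaginary eigenvalues.

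For the spectral condition I would argue by contradiction. Suppose $\mathcal{A}_T W=i\lambda W$ with $\lambda\in\mathbb{R}$ and $0\neq W=(\varphi,\Phi,\psi,\Psi)\in\mathcal{D}(\mathcal{A}_T)$; since $0\in\varrho(\mathcal{A}_T)$ (a standard computation, exactly as for $\mathcal{A}$), we may assume $\lambda\neq 0$. Taking the real part of $i\lambda\|W\|_{\mathbf{H}}^2=\langle\mathcal{A}_T W,W\rangle_{\mathbf{H}}$ and using \eqref{visco2} forces $\Phi(\xi)=\Psi(\xi)=0$, and since $\Phi=i\lambda\varphi$ and $\Psi=i\lambda\psi$ this gives $\varphi(\xi)=\psi(\xi)=0$. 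The transmission conditions \eqref{trxx} then make the jumps of $\kappa\varphi_x$ and $b\psi_x$ across $\xi$ vanish, so in fact $\varphi,\psi\in H^2(0,\ell)$ and $(\varphi,\psi)$ is an eigenfunction of the \emph{conservative} Timoshenko operator on $(0,\ell)$ subject to the boundary conditions \eqref{T1eqNs} together with the interior nodal conditions $\varphi(\xi)=\psi(\xi)=0$. The heart of the matter is to show that this overdetermined problem has only the trivial solution under the hypothesis on $\xi$: I would write $\varphi,\psi$ as combinations of the exponential modes attached to the two characteristic roots of the eigenvalue equation, impose the four conditions \eqref{T1eqNs} to obtain the eigenfunction (up to a scalar) and the admissible $\lambda$, and then compute the common zeros of its two components; one finds that both components can vanish at an interior point $\xi$ only when $\xi/\ell$ lies in $\{\,2n/(2m+1):n,m\in\mathbb{N},\ \gcd(2n,2m+1)=1\,\}$ — precisely the values excluded in the statement, and precisely the $\xi$ for which the observability inequality from $x=\xi$ for the Timoshenko model degenerates. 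This contradiction yields $i\mathbb{R}\subset\varrho(\mathcal{A}_T)$.

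For the essential growth bound I would pass to a high‑frequency analysis. Modulo the lower‑order coupling, the Timoshenko flow decomposes into transport along its two characteristic speeds, and the point damper at $x=\xi$ acts on each branch as a scatterer whose transmission/reflection is a strict contraction for the energy; since $0<\xi<\ell$, every characteristic ray meets $x=\xi$ within a uniformly bounded time, and the arithmetic restriction on $\xi$ rules out a mode being simultaneously unobservable at $x=\xi$ through both dampers. Consequently the high‑frequency energy decays at a uniform exponential rate, while the residual part — the finitely many low frequencies together with the lower‑order coupling terms — is a compact correction, in the spirit of the decomposition used in Lemma~\ref{lemma-T00}; hence $\omega_{ess}(\mathbf{T}(t))<0$. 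Combining this with the previous step, Theorem~\ref{TNovo} gives the exponential stability of $\mathbf{T}(t)$, and Theorem~\ref{T00} transfers it to $\mathcal{T}(t)$.

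The main obstacle is the spectral/number‑theoretic part: extracting the explicit eigenfunctions of the conservative Timoshenko operator with the boundary conditions \eqref{T1eqNs} and pinning down exactly for which interior points $\xi$ its two components can share a zero — equivalently, proving the sharp observability inequality for the Timoshenko model from the single point $x=\xi$, from which both $i\mathbb{R}\subset\varrho(\mathcal{A}_T)$ and the uniform high‑frequency dissipation needed for $\omega_{ess}(\mathbf{T}(t))<0$ follow. Everything else is a combination of the reduction results already in hand (Theorems~\ref{T00} and \ref{TNovo}, Lemma~\ref{lemma-T00}) with routine, if technical, propagation‑and‑compactness estimates.
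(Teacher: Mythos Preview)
Your reduction to the non-hybrid semigroup $\mathbf{T}(t)$ via Theorem~\ref{T00} is exactly what the paper does; the paper's own proof is then a single sentence invoking \cite{TimoPoint} for the exponential stability of $\mathbf{T}(t)$. What you have written is therefore an attempt to reproduce the content of that reference. Your argument for $i\mathbb{R}\subset\varrho(\mathcal{A}_T)$ is along the right lines: the dissipativity identity forces $\varphi(\xi)=\psi(\xi)=0$, the jumps disappear, and one is left with an overdetermined conservative eigenvalue problem whose compatibility singles out precisely the forbidden set of $\xi$.

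The real gap is in your treatment of $\omega_{ess}(\mathbf{T}(t))<0$. ``High-frequency analysis'' and ``compact correction'' are not a proof here, and the actual mechanism is more specific than what you describe. One rewrites the Timoshenko system in Riemann invariants so that it becomes a $4\times4$ first-order hyperbolic system with diagonal principal part and bounded off-diagonal coupling; a result of Neves et al.\ (which requires the two wave speeds $\sqrt{\kappa/\rho_1}$ and $\sqrt{b/\rho_2}$ to be distinct) then shows that discarding the off-diagonal part is a compact perturbation and hence leaves $\omega_{ess}$ unchanged. The remaining decoupled transport equations with the point damper at $\xi$ have an \emph{explicit} resolvent, and its uniform boundedness on $i\mathbb{R}$ amounts to showing that a function of the type
\[
\mathfrak{F}_\xi(\lambda)=\cos^2\!\Big(\tfrac{\lambda}{k_i}\ell\Big)+c\,\sin^2\!\Big(\tfrac{\lambda}{k_i}\xi\Big)\cos^2\!\Big(\tfrac{\lambda}{k_i}(\ell-\xi)\Big)
\]
is bounded below by a positive constant. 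It is \emph{here}, not in the eigenvalue step, that the hypothesis $\xi\in\mathbb{Q}\ell$ is genuinely used: it makes $\mathfrak{F}_\xi$ periodic in $\lambda$, so that pointwise nonvanishing (guaranteed by $\xi\neq\frac{2n}{2m+1}\ell$) upgrades to a uniform lower bound. Your sketch neither identifies this mechanism nor explains where $\xi\in\mathbb{Q}\ell$ enters, so as written the essential-type half of the argument does not go through.
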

\begin{proof}
Using the result of \cite {TimoPoint} and Theorem \ref{T00} we conclude that the hybrid model \eqref{NewTr1.1}-\eqref{Tr1.2} is exponentially stable. 
\end{proof}

We will finish this section showing the observability result to Timoshenko model \eqref{NewTr1.1}. To do this,
let us introduce the functionals

\begin{eqnarray}
\mathcal{I}(x,t)&=&\rho_2b|\psi_t(x,t)|^2 +|M(x,t)|^2+\rho_1\kappa|\varphi_t(x,t)|^2 +|S(x,t)|^2,\label{II1}
\\
\noalign{\medskip}
\mathcal{L}(t)&=&\int_0^\ell \left( \rho_2bq_x|\psi_t|^2 +q_x|M|^2+\rho_1\kappa q_x|\varphi_t|^2 +q_x|S|^2  \right)dx \nonumber\\
\noalign{\medskip}
&&- \int_0^L \left(q \rho_1\kappa\varphi_t \overline{\psi_t}- qS\overline{M}\right)dx,\label{II2}
\end{eqnarray}
where $q$ is as
 in (\ref{def-q}). Hence,
there exist positive constants $C_0$ and $C_1$ such that
\begin{equation}\label{equivxx}
C_0\int_0^\ell \mathcal{I}(x,t)\;dx\leq \mathcal{L}(t)\leq C_1\int_0^\ell \mathcal{I}(x,t)\;dx.
\end{equation}
Under the above conditions we have
\begin{Le}\label{observability}
The solution of system \eqref{NewTr1.1} satisfies
$$
\left|\int_0^tq(\ell)\mathcal{I}(\ell,t)\;dx-\int_0^t\mathcal{L}(s)\;ds\right|\leq cE(0),
$$
$$
\left|\int_0^tq(0)\mathcal{I}(0,t)\;dx-\int_0^t\mathcal{L}(s)\;ds\right|\leq cE(0).
$$
\end{Le}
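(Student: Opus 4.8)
The plan is to prove both identities by the standard multiplier (Rellich--Pohozaev) technique adapted to the Timoshenko system on the transmission domain $I=(0,\xi)\cup(\xi,\ell)$, keeping careful track of the jump terms at $x=\xi$ prescribed by \eqref{trxx}. I would first multiply the equation \eqref{NewTr1.1}$_1$ by $q\,\overline{\varphi_x}$ and \eqref{NewTr1.1}$_2$ by $q\,\overline{\psi_x}$, where $q$ is the cut-off/weight function from (\ref{def-q}), integrate over $I\times(0,t)$, and integrate by parts in both $x$ and $t$. The time-boundary terms produce $\int_0^\ell q\,(\rho_1\varphi_t\overline{\varphi_x}+\rho_2\psi_t\overline{\psi_x})\,dx\big|_0^t$, which is bounded by $cE(0)$ since the energy $E(t)\le E(0)$ is non-increasing (dissipativity of $\mathcal{A}$, cf. \eqref{diss}). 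The spatial integrations by parts generate exactly the boundary contributions $q\,\mathcal{I}$ evaluated at $x=0$ and $x=\ell$ (with sign determined by the orientation), together with the interior term $\int_0^t\mathcal{L}(s)\,ds$ after recognizing, via \eqref{II2} and the product-rule identities, that the bulk terms reassemble into $\mathcal{L}$.

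The key bookkeeping is at $x=\xi$: each integration by parts on $(0,\xi)$ and on $(\xi,\ell)$ separately leaves a term of the form $q(\xi)\big[\kappa(\varphi_x+\psi)\overline{\varphi_x}\big]$ and $q(\xi)\big[b\psi_x\overline{\psi_x}\big]$ evaluated at the two sides of $\xi$; subtracting, one gets jump expressions $\salto{k\varphi_x}{}$ and $\salto{b\psi_x}{}$, which by \eqref{trxx} equal $\gamma_1\varphi_t(\xi,t)$ and $\gamma_2\psi_t(\xi,t)$. Integrating these in time and using $\int_0^t(\gamma_1|\varphi_t(\xi,s)|^2+\gamma_2|\psi_t(\xi,s)|^2+\epsilon|V(s)|^2)\,ds\le E(0)$ — which is precisely \eqref{diss2} integrated, i.e. the dissipation is absorbed by the initial energy — these junction terms are controlled by $cE(0)$. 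The continuity of $\varphi,\psi$ across $\xi$ (also in \eqref{trxx}) ensures the remaining pieces of the jump combine without singular contributions. After collecting everything, one obtains
\[
\Big|\int_0^t q(\ell)\,\mathcal{I}(\ell,s)\,ds-\int_0^t\mathcal{L}(s)\,ds\Big|\le c\,E(0),
\]
and the analogous identity at $x=0$ follows by the same computation reading off the $x=0$ endpoint terms; note that the boundary conditions \eqref{eqNs} ($\varphi(0)=0$, $\psi_x(0)=0$, $\psi(\ell)=0$) kill several of these endpoint contributions, which is why the remainder is only $O(E(0))$ rather than involving additional traces.

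The main obstacle is the careful treatment of the low-order coupling term $\kappa(\varphi_x+\psi)$: when multiplying by $q\overline{\varphi_x}$ the term $\kappa\psi\,q\overline{\varphi_x}$ does not immediately fit into $\mathcal{I}$ or $\mathcal{L}$, and one must symmetrize it against the corresponding term arising from the $\psi$-equation (this is the origin of the mixed term $-\int q(\rho_1\kappa\varphi_t\overline{\psi_t}-qS\overline{M})\,dx$ in the definition \eqref{II2} — note the typo-level ambiguity in \eqref{II2} between $L$ and $\ell$, which I read as $\ell$). Establishing that these mixed and lower-order contributions, after integration by parts, produce only $\int_0^t\mathcal{L}\,ds$ plus terms bounded by $cE(0)$ — using \eqref{equivxx} to pass freely between $\mathcal{L}$ and $\int_0^\ell\mathcal{I}\,dx$, and using $\int_0^t E(s)\,ds$-type bounds that again reduce to $E(0)$ via exponential stability (Theorem \ref{esstipe}) or at least monotonicity — is the technical heart of the argument. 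Everything else is routine integration by parts.
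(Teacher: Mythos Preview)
Your approach is essentially the same multiplier (Rellich--Pohozaev) technique the paper uses, and the overall structure is correct. Two points of comparison are worth noting.

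First, the paper multiplies \eqref{NewTr1.1}$_1$ by $q\overline{S}$ and \eqref{NewTr1.1}$_2$ by $q\overline{M}$, rather than by $q\overline{\varphi_x}$ and $q\overline{\psi_x}$ as you propose. Since $S=\kappa(\varphi_x+\psi)$ and $M=b\psi_x$, the difference is the low-order term $\kappa\psi$ in the first multiplier. This choice pays off: the products $S_x\cdot q\overline{S}$ and $M_x\cdot q\overline{M}$ are exact derivatives $\tfrac12 q\frac{d}{dx}|S|^2$ and $\tfrac12 q\frac{d}{dx}|M|^2$, so the quantities $\mathcal{I}$ and $\mathcal{L}$ assemble immediately, and the only residual mixed terms are precisely $\rho_1\kappa\int q\varphi_t\overline{\psi_t}$ and $\int qS\overline{M}$, which are already built into $\mathcal{L}$ in \eqref{II2}. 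In other words, the paper's choice of multipliers \emph{sidesteps} what you identify as ``the main obstacle'' (the low-order coupling $\kappa\psi$); no symmetrization or extra integration by parts is needed there.

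Second, the paper first obtains a single identity containing both endpoints,
\[
\int_0^t\big[q(\ell)\mathcal{I}(\ell,s)-q(0)\mathcal{I}(0,s)\big]\,ds-\int_0^t\mathcal{L}(s)\,ds=\left.\int_0^\ell(\rho_1q\varphi_tS+\rho_2q\psi_tM)\,dx\right|_0^t,
\]
and then extracts the two stated inequalities by taking the two weights in \eqref{def-q}: $q$ with $q(0)=0$ kills the $x=0$ contribution, and $q_0$ with $q_0(\ell)=0$ kills the $x=\ell$ contribution. You gesture at this (``reading off the $x=0$ endpoint terms'') but do not make the mechanism explicit. Also, the paper does not discuss the jump terms at $x=\xi$ at all; your treatment of them via the dissipation bound is more careful than what the paper actually writes, and is the right way to justify that step. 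Finally, exponential stability is not needed here; energy monotonicity $E(t)\le E(0)$ suffices to bound the time-boundary terms.
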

\begin{proof}
Let us multiply equation (\ref{NewTr1.1})$_2$  by $q\overline{M}$
we get
\begin{equation}\label{Obpsi}
\frac{d}{dt}\int_0^\ell\rho_2q\psi_tM\;dx-\frac 12\int_0^\ell q\frac{d}{dx}\left[\rho_2b|\psi_t|^2+|M|^2 \right]dx=-\int_0^\ell q\overline{M}S\;dx.
\end{equation}
Similarly, multiplying equation (\ref{NewTr1.1})$_1$ by $q\overline{S},$ we get
\begin{eqnarray}\label{Obpsi1}
\frac{d}{dt}\int_0^\ell\rho_1q\varphi_tS\;dx-\frac 12\int_0^\ell q\frac{d}{dx}\left[\rho_2\kappa|\varphi_t|^2+|S|^2 \right]dx&=&
\rho_1\kappa\int_0^Lq\varphi_t\psi_t\;dx.
\end{eqnarray}
Therefore summing identities \eqref{Obpsi} and \eqref{Obpsi1} and integrating over $[0,t]$ we get
$$
\frac 12\int_0^t\int_0^\ell q\frac{d}{dx}\mathcal{I}(x,t)\;dxdt=\left.\int_0^\ell\left(\rho_1q\varphi_tS+\rho_2q\psi_tM\right)
\;dx\right|_0^t-
\int_0^t\int_0^L\rho_1\kappa q\varphi_t\psi_t-q\overline{M}S\;dx,
$$
performing integrations by parts and recalling the definition of $\mathcal{L}$, we get
$$
\int_0^t\left[q(\ell)\mathcal{I}(\ell,s)-q(0)\mathcal{I}(0,s)\right]\;ds
-\int_0^t\mathcal{L}(s)\;ds
=\left.\int_0^\ell\left(\rho_1q\varphi_tS+\rho_2q\psi_tM\right)dx\right|_0^t.
$$
Since
$$
\left|\int_0^\ell\rho_2q\psi_tM\;dx\right|\leq cE(0),\quad \left|\int_0^\ell\rho_1q\varphi_tS\;dx\right|\leq cE(0),
$$
we conclude that
$$
\left|\int_0^t\left[q(\ell)\mathcal{I}(\ell,s)-q(0)\mathcal{I}(0,s)\right]ds-\int_0^t\mathcal{L}(s)\;ds\right|\leq cE(0),
$$
taking
\begin{equation}\label{def-q}
q(x)= \frac{e^{nx}-1}{n},\qquad q_0(x)= \frac{e^{-nx}-e^{-n\ell}}{n}.
\end{equation}

It should be noted that $q'(x)$ is substantially larger than $q(x)$ for sufficiently large values of $n$.
Therefore, there exist positive constants $c_0$ and $c_1$ such that
$$
c_0\int_a^b\mathcal{I}(x)\;dx\leq \mathcal{L} \leq c_1 \int_0^L\mathcal{I}(x)\;dx.
$$
So
our result follows.
\end{proof}

\section{Applications to Contact  Problem with normal compliance}\label{sec4}
\setcounter{equation}{0}

Here we will consider the contact problem with normal compliance, given by 
   \begin{eqnarray}\label{NorComp}
\begin{array}{ll}
 \dis\rho_1\varphi_{tt}-k(\varphi_x+\psi)_x=-F(\varphi)
,& \text{in}\ I\times(0,\infty),\\
\noalign{\medskip}
 \dis\rho_2\psi_{tt}-b\psi_{xx}+k(\varphi_x+\psi)=-G(\psi)
,& \text{in}\ I\times(0,\infty),\\
\noalign{\medskip}
S(L,t)=-d_2
\left[(u(L,t)-g_2)^{+}\right] ^p+d_1\left[(g_1-u(L,t))^{+}\right] ^p & \text{in}\ (0,\infty),
\end{array}
 \end{eqnarray}
 verifying the boundary condition \eqref{eqNs} and the initial conditions.  
The normal compliance condition is given by condition \eqref{NorComp}$_3$ where \( S(L,t) \) is the normal traction, \( u(L,t) \) is the normal displacement, \( g_i \) is the initial gap, \( d_i > 0 \) is the stiffness coefficient. The power \( p \) governs the nonlinearity of the contact response. The most prevalent powers are \( p = 1 \), \( p = 2 \) and  \( p = 3 \). 
The linear compliance \( p = 1 \) is a linear reaction force, resembling a Hookean spring. This conditions is 
 used in  Mechanical Engineering: gears, bearings,  elastic beams on rigid supports; or in Civil Engineering: standard foundations (see \cite{Kikuchi,Eck}). Quadratic Compliance \( p = 2 \) is applied to model quadratic reaction force, modeling nonlinear hardening, with applications to vehicle Dynamics: tire-road contact (see \cite{Barber}).
The cubic compliance \( p = 3 \) or cubic reaction force is suitable for large deformations, with applications to 
Biomechanics: articular tissues, prostheses (see \cite{Glowinski,Wriggers}).
 
 We first establish the exponential stability and the existence of global attractor to the hybrid model

   \begin{eqnarray}\label{PenSystemiip}
\begin{array}{ll}
 \dis\rho_1\varphi^{\epsilon}_{tt}-k(\varphi^{\epsilon}_x+\psi^{\epsilon})_x=-F(\varphi^{\epsilon})
,& \text{in}\ I\times(0,\infty),\\
\noalign{\medskip}
 \dis\rho_2\psi^{\epsilon}_{tt}-b\psi^{\epsilon}_{xx}+k(\varphi^{\epsilon}_x+\psi^{\epsilon})=-G(\psi^{\epsilon})
,& \text{in}\ I\times(0,\infty),\\
\noalign{\medskip}
 \epsilon v_{tt}^{\epsilon} +\epsilon v_t^{\epsilon} +\epsilon v^{\epsilon}+S^\epsilon(L,t)=-d_2
\left[(v^{\epsilon}-g_2)^{+}\right] ^p+d_1\left[(g_1-v^{\epsilon})^{+}\right] ^p & \text{in}\ (0,\infty),
\end{array}
 \end{eqnarray}
 Denoting by  $\mathcal{F}$ the function
 \begin{eqnarray}\label{fff}
\mathcal{F}(U)=(0,F(\varphi),0,G(\psi),0,f(v))^\top, 
\end{eqnarray}
where 
$$
f(v)=d_2
\left[(v^{\epsilon}-g_2)^{+}\right] ^p-d_1\left[(g_1-v^{\epsilon})^{+}\right] ^p.
$$
It is not dificult to see that conditions  (\ref{ff0})--(\ref{ff2}) are verified. In fact, $\mathcal{F}(0)=0$. Moreover

\begin{eqnarray*}
\int_0^t\big({\mathcal{F}}(U(s)),U(s)\big)_{\mathcal{H}}\,ds &=&-\int_0^tf(v)v_t \,ds
-\int_0^t\int_0^{\ell}\left[F(\varphi)\varphi_t +G(\psi)\psi_t\right] \,dx\,ds\\
 &=&-\int_0^t\frac{d}{dt}\widehat{f}(v) \;ds -\int_0^t\int_0^{\ell}\frac{d}{dt}[\widehat{F}(\varphi)+\widehat{G}(\psi)]dx\,ds\\
&\leq& \widehat{f}(v_0)+\int_0^{\ell}\widehat{F}(\varphi_0)\,dx+\int_0^{\ell}\widehat{G}(\psi_0)\,dx,
\end{eqnarray*}
where
$$
\widehat{f}=\frac{d_2}{p+1}\left[(v^{\epsilon}-g_2)^{+}\right] ^{p+1}+\frac{d_1}{p+1}\left[(g_1-v^{\epsilon})^{+}\right] ^{p+1},
$$
$$
\widehat{F}(\varphi)=\int_0^{\varphi}F(s)\;ds,\qquad \widehat{G}(\psi)=\int_0^{\psi}F(s)\;ds.
$$
Moreover, if we consider 
$$
F(s)=\mu_1s|s|^{\alpha},\quad G(s)=\mu_2s|s|^{\beta},
$$
 using the mean value Theorem, we obtain the inequality
$$
\Big|s|s|^{\alpha}-r|r|^{\alpha}\Big |\leq (|s|^\alpha+|r|^\alpha)|s-r|.
$$
For $R>0$ there exists the cut-off function for $F$ given by 
\begin{eqnarray*}
F_{1,R}=\left\{
\begin{array}{ll}
\mu_1 x|x|^{\alpha}& \text{if $x\leq R$},\\
\noalign{\medskip}
\mu_1 x|R|^{\alpha}&\text{if $x\geq R$},
\end{array}
\right.
\end{eqnarray*}
Similarly for $G$ and $f$. Note that the function $\mathcal{F}$ defined in \eqref{fff} verifies conditions \eqref{ff0}--\eqref{ff2} and \eqref{Freg1}--\eqref{Freg2}.

\noindent
Next we show the energy inequality
\begin{Le}\label{Energy}
The solution of system \eqref{NewTr1.1} satisfies
\begin{eqnarray}\label{estimate}
 E(t,\varphi^{\epsilon},\psi^{\epsilon})+  \int_0^t\left[ \gamma_1|\varphi_t^{\epsilon}(\xi,t)|^2  +\gamma_2 |\psi_t^{\epsilon}(\xi,t)|^2\right]dt\leq
  E(0,\varphi^{\epsilon},\psi^{\epsilon}),
\end{eqnarray}
where
$$
2E(t)=
 \int_0^\ell\bigg[\rho_1|\varphi_t^{\epsilon}|^2+\rho_2|
\psi_t^{\epsilon}|^2+k|\varphi_x^{\epsilon} + \psi^{\epsilon}|^2 +b|\psi_x^{\epsilon}|^2
\bigg]dx+\mathcal{N}_p(t)+ \epsilon|v_t^{\epsilon}|^2+ \epsilon|v^{\epsilon}|^2,  \label{semigroup 2.6}
$$
and
$$
\mathcal{N}_p(t):=\frac{d_2}{p+1}|(\varphi^{\epsilon}(\ell,t)-g_2)^{+}|^{p+1}+\frac{d_2}{p+1}|(g_1-\varphi^{\epsilon}(\ell,t))^{+}|^{p+1}.
$$
\end{Le}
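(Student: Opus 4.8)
The plan is to derive the energy identity by the standard multiplier method applied to the hybrid system \eqref{PenSystemiip}, and then observe that the extra nonlinear terms assemble into the time derivative of the potential $\mathcal{N}_p$. First I would multiply equation \eqref{PenSystemiip}$_1$ by $\overline{\varphi_t^{\epsilon}}$ and integrate over $(0,\ell)$; integrating by parts in $x$ and using the boundary conditions \eqref{eqNs} together with the transmission conditions \eqref{trxx}, the shear term $-k(\varphi_x^{\epsilon}+\psi^{\epsilon})_x$ produces the boundary contribution $-S^{\epsilon}(\ell,t)\overline{\varphi_t^{\epsilon}(\ell,t)}$, the interior term $\frac{k}{2}\frac{d}{dt}\int_0^\ell|\varphi_x^{\epsilon}+\psi^{\epsilon}|^2\,dx$ minus a cross term $k\int_0^\ell(\varphi_x^{\epsilon}+\psi^{\epsilon})\overline{\psi_t^{\epsilon}}\,dx$, and the dissipation term $\gamma_1|\varphi_t^{\epsilon}(\xi,t)|^2$ coming from the jump $\salto{k\varphi_x}{}=\gamma_1\varphi_t(\xi,t)$; on the right-hand side $-F(\varphi^{\epsilon})\overline{\varphi_t^{\epsilon}}$ integrates to $-\frac{d}{dt}\int_0^\ell\widehat{F}(\varphi^{\epsilon})\,dx$ (for real solutions, using $\widehat F'=F$). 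Doing the same with \eqref{PenSystemiip}$_2$ tested against $\overline{\psi_t^{\epsilon}}$ gives the analogous identity with $b|\psi_x^{\epsilon}|^2$, the dissipation $\gamma_2|\psi_t^{\epsilon}(\xi,t)|^2$, the reaction $-\frac{d}{dt}\int_0^\ell\widehat{G}(\psi^{\epsilon})\,dx$, and a cross term $+k\int_0^\ell(\varphi_x^{\epsilon}+\psi^{\epsilon})\overline{\psi_t^{\epsilon}}\,dx$ which exactly cancels the cross term from the first equation.

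Next I would handle the boundary ODE \eqref{PenSystemiip}$_3$. Multiplying it by $\overline{v_t^{\epsilon}}$ gives $\frac{\epsilon}{2}\frac{d}{dt}|v_t^{\epsilon}|^2+\epsilon|v_t^{\epsilon}|^2+\frac{\epsilon}{2}\frac{d}{dt}|v^{\epsilon}|^2+S^{\epsilon}(\ell,t)\overline{v_t^{\epsilon}}$ on the left; since $v^{\epsilon}(t)=\varphi^{\epsilon}(\ell,t)$, the term $S^{\epsilon}(\ell,t)\overline{v_t^{\epsilon}}$ precisely cancels the boundary contribution $-S^{\epsilon}(\ell,t)\overline{\varphi_t^{\epsilon}(\ell,t)}$ left over from the first multiplier step. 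The right-hand side is $\left(-d_2[(v^{\epsilon}-g_2)^{+}]^p+d_1[(g_1-v^{\epsilon})^{+}]^p\right)\overline{v_t^{\epsilon}}$, which equals $-\frac{d}{dt}\mathcal{N}_p(t)$ because $\frac{d}{dt}\frac{d_2}{p+1}[(v^{\epsilon}-g_2)^{+}]^{p+1}=d_2[(v^{\epsilon}-g_2)^{+}]^p v_t^{\epsilon}$ and $\frac{d}{dt}\frac{d_2}{p+1}[(g_1-v^{\epsilon})^{+}]^{p+1}=-d_2[(g_1-v^{\epsilon})^{+}]^p v_t^{\epsilon}$ (note the coefficient $d_2$ appearing in both summands of $\mathcal{N}_p$ as written in the statement; one should read the second constant as $d_1$ for the cancellation, consistent with $\widehat f$). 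Adding the three identities, all boundary and cross terms cancel and we obtain $\frac{d}{dt}E(t)=-\gamma_1|\varphi_t^{\epsilon}(\xi,t)|^2-\gamma_2|\psi_t^{\epsilon}(\xi,t)|^2\leq 0$, where $E$ is exactly the quantity in the statement.

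Finally, integrating this differential identity over $(0,t)$ yields \eqref{estimate}. To make the argument fully rigorous one should first establish it for strong solutions (initial data in $D(\mathcal{A})$ perturbed by the Lipschitz nonlinearity, so that $\varphi^{\epsilon}_{tt},\psi^{\epsilon}_{tt}\in L^2$ and all the integrations by parts are justified), and then pass to general mild solutions by the density argument already invoked after Theorem \ref{Lipschitz}, using continuity of $E$ with respect to the $\mathcal{H}$-norm (here one uses that $v\mapsto\mathcal{N}_p(v)$ and $\varphi\mapsto\int_0^\ell\widehat F(\varphi)\,dx$ are continuous on the appropriate spaces, the latter via the Sobolev embedding $V_\ell\hookrightarrow C[0,\ell]$). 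The main obstacle is purely bookkeeping: tracking the boundary term at $x=\ell$ and the jump terms at $x=\xi$ so that they cancel cleanly against the ODE multiplier and the transmission conditions; once the cancellations are lined up, no estimate is needed beyond the sign of the dissipation.
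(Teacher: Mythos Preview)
Your approach is exactly the one the paper uses: the paper's proof consists of the single sentence ``Multiplying equation (\ref{NewTr1.1})$_1$ by $\varphi_t$, equation (\ref{NewTr1.1})$_2$ by $\psi_t$, and equation (\ref{NewTr1.1})$_3$ by $v_t$, summing up the product result our conclusion follows,'' and you have spelled out precisely those computations, including the cancellation of the boundary term at $x=\ell$ via the ODE and the emergence of the pointwise dissipation from the transmission conditions. One small bookkeeping point: your derivation also produces the terms $-\epsilon|v_t^\epsilon|^2$ and $-\frac{d}{dt}\int_0^\ell(\widehat F(\varphi^\epsilon)+\widehat G(\psi^\epsilon))\,dx$, which do not appear in the stated $E$ or in the dissipation integral; since both have the right sign ($\widehat F,\widehat G\ge 0$ for the nonlinearities considered), they can simply be dropped to obtain the inequality \eqref{estimate}, consistent with the paper's somewhat loose statement.
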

\begin{proof}
 Multiplying equation (\ref{NewTr1.1})$_1$ by $\varphi_t,$ equation (\ref{NewTr1.1})$_2$ by $\psi_t,$ and equation (\ref{NewTr1.1})$_3$ by $v_t,$ summing up the product result our conclusion follows.
\end{proof}

Under the above conditions we have that

\begin{Th}\label{ExpN}
For any initial condition \( U_0 \in \mathcal{H} \), there exists a unique global mild solution to the  semilinear semigroup defined by hybrid system  \eqref{PenSystemiip} that  is exponentially stable.  Moreover if \( U_0 \in D(\mathcal{A}) \) then the mild solution is the strong solution of \eqref{absCE}. 
\end{Th}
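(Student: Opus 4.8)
The plan is to observe that system \eqref{PenSystemiip}, together with the boundary conditions \eqref{eqNs} and the initial data \eqref{Tr1.2}, is precisely the abstract Cauchy problem \eqref{absCE} for the hybrid generator $\mathcal{A}$ of \eqref{IGA}--\eqref{Domain} and the nonlinearity $\mathcal{F}$ introduced in \eqref{fff}, so that Theorem~\ref{Lipschitz} can be invoked almost verbatim. First I would rewrite \eqref{PenSystemiip} in the form
\[
U_t-\mathcal{A}U=\mathcal{F}(U),\qquad U(0)=U_0\in\mathcal{H},
\]
with $U=(\varphi^{\epsilon},\varphi^{\epsilon}_t,\psi^{\epsilon},\psi^{\epsilon}_t,v^{\epsilon},v^{\epsilon}_t)^\top$. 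Theorem~\ref{esstipe} then furnishes the linear ingredient: under the arithmetic assumption on $\xi$ (which we keep in force), $\mathcal{T}(t)=e^{\mathcal{A}t}$ is a $C_0$-semigroup of contractions which is exponentially stable.

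It remains to check that $\mathcal{F}$ satisfies hypotheses \eqref{ff0}--\eqref{ff2}, which is exactly the content of the discussion preceding the statement; for completeness I would spell out the three points. Property \eqref{ff0} holds because, at the reference configuration, the end $x=\ell$ lies in the interior of the gap, so $f(0)=0$, while $F(0)=G(0)=0$ for the model choices $F(s)=\mu_1 s|s|^{\alpha}$, $G(s)=\mu_2 s|s|^{\beta}$. Property \eqref{ff1} follows by introducing the cut-off functions $F_{1,R}$, $G_{1,R}$, $f_R$, combining the elementary bound $\big|\,s|s|^{\alpha}-r|r|^{\alpha}\,\big|\le(|s|^{\alpha}+|r|^{\alpha})|s-r|$ with the embedding $H^1(0,\ell)\hookrightarrow C([0,\ell])$, which controls the point values $\varphi(\xi)$, $\psi(\xi)$, $\varphi(\ell)$ and the scalar $v$ by the $\mathcal{H}$-norm; hence $\widetilde{\mathcal{F}_R}$ is globally Lipschitz on $\mathcal{H}$ and agrees with $\mathcal{F}$ on $B_R$. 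Property \eqref{ff2} follows from the energy computation of Lemma~\ref{Energy} applied to the truncated system: multiplying the three equations by $\varphi^{\epsilon}_t$, $\psi^{\epsilon}_t$, $v^{\epsilon}_t$ and integrating yields
\[
\int_0^t\big(\widetilde{\mathcal{F}_R}(U(s)),U(s)\big)_{\mathcal{H}}\,ds\le\widehat{f_R}(v_0)+\int_0^\ell\big[\widehat{F_R}(\varphi_0)+\widehat{G_R}(\psi_0)\big]\,dx\le\kappa_0\|U_0\|_{\mathcal{H}}^2,
\]
the last step because the primitives of the truncated nonlinearities have at most quadratic growth, the boundary term being absorbed by the trace/Sobolev inequality.

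Once \eqref{ff0}--\eqref{ff2} are in hand, Theorem~\ref{Lipschitz} applies directly and gives, for every $U_0\in\mathcal{H}$, a unique global mild solution of \eqref{PenSystemiip} decaying exponentially to zero as $t\to\infty$; and if $U_0\in D(\mathcal{A})$ the same theorem yields that this mild solution is the strong solution, with $U\in C^1(0,T;\mathcal{H})\cap C(0,T;D(\mathcal{A}))$. I expect the only delicate point to be the quadratic bound in \eqref{ff2}: the unmodified primitives $\widehat{F}(\varphi_0)\sim|\varphi_0|^{\alpha+2}$ and $\widehat{f}(v_0)$ are supercritical and cannot be dominated by $\|U_0\|_{\mathcal{H}}^2$, so the truncation must be carried out \emph{before} the energy estimate and the trace inequality must be used to handle the boundary contribution; everything else is a transcription of the proof of Theorem~\ref{Lipschitz}.
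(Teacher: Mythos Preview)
Your proposal is correct and takes essentially the same approach as the paper: the paper's entire proof is the single sentence ``It is a direct consequence of Theorem~\ref{Lipschitz},'' relying on the verification of \eqref{ff0}--\eqref{ff2} carried out just before the statement and on Theorem~\ref{esstipe} for the exponential stability of the linear hybrid semigroup. Your write-up is in fact more careful than the paper on one point: you observe that the bound $\widehat{f}(v_0)+\int_0^\ell[\widehat{F}(\varphi_0)+\widehat{G}(\psi_0)]\,dx\le\kappa_0\|U_0\|_{\mathcal{H}}^2$ required by \eqref{ff2} cannot hold for the untruncated primitives (which grow like $|\cdot|^{\alpha+2}$, $|\cdot|^{p+1}$), and that the cut-off $\widetilde{\mathcal{F}_R}$ must be in place before the energy computation---a detail the paper glosses over.
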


\begin{proof}
   It is a direct consequence of  Theorem \ref{Lipschitz}. 
\end{proof}
Moreover it is not difficult to see that the function $\mathcal{F}$ defined in \eqref{fff} verifies conditions 
\eqref{ff0}--\eqref{ff2} and  \eqref{Freg1}--\eqref{Freg2}. So we have the following result.

\begin{Th} \label{Attractor2} Let us denote by $T_\epsilon(t)$ the semigroup defined by the semilinear hybrid system \eqref{PenSystemiip},  then $T_\epsilon(t)$ possesses a unique compact global attractor $\mathfrak{A}_\epsilon$ contained in $D(\mathcal{A})$. 
\end{Th}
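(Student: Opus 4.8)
The plan is to view Theorem~\ref{Attractor2} as the concrete instance of the abstract Theorem~\ref{Attractor} applied to the semilinear hybrid system \eqref{PenSystemiip}, so that the proof reduces to checking, one at a time, the hypotheses of that abstract result. First I would write \eqref{PenSystemiip} in the form \eqref{absCE} with $\mathbb{A}=\mathcal{A}$, the infinitesimal generator of the linear hybrid model introduced in \eqref{IGA}--\eqref{Domain} on the phase space $\mathcal{H}$, and with $\mathcal{F}$ the nonlinear map \eqref{fff}. The hypothesis that $\mathcal{A}$ generates a $C_0$-semigroup of contractions which is exponentially stable is guaranteed by Theorem~\ref{esstipe}; thus the standing assumption on $\mathbb{A}$ of Theorem~\ref{Lipschitz}, hence of Theorem~\ref{Attractor}, is in force.

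Next I would invoke the verifications already made in this section: the nonlinearity $\mathcal{F}$ of \eqref{fff} satisfies \eqref{ff0}--\eqref{ff2} --- $\mathcal{F}(0)=0$, the cut-off functions $F_{1,R}$, $G_{1,R}$ and the analogous truncation of $f$ furnish a globally Lipschitz $\widetilde{\mathcal{F}_R}$ coinciding with $\mathcal{F}$ on $B_R$ (here the one-dimensional embedding $H^1\hookrightarrow L^\infty$ is what makes the Nemytskii operators $\varphi\mapsto F(\varphi)$, $\psi\mapsto G(\psi)$ land in $L^2$ and be locally Lipschitz), and the bound \eqref{ff2} follows from the computation preceding Lemma~\ref{Energy}, since $\widehat f,\widehat F,\widehat G\ge 0$ are dominated by $\|U_0\|_{\mathcal H}^2$. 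I would likewise record that $\mathcal{F}$ is $C^1$ and satisfies \eqref{Freg1}--\eqref{Freg2} with $\mathfrak{B}U=(0,\varphi,0,\psi,0,v)^\top$: the factorization $\mathcal{F}(U)=\widetilde{\mathcal{F}}(\mathfrak{B}U)$ is built into \eqref{fff}, and the growth bound $\|D\mathcal{F}(Y)\|\le c\|Y\|_{\mathcal H}$ on bounded sets comes from $F'(s)\sim|s|^{\alpha}$, $G'(s)\sim|s|^{\beta}$ (and the smoothness of $f$) together with the same Sobolev embedding. With these in hand, Theorem~\ref{Lipschitz} (equivalently Theorem~\ref{ExpN}) defines the semigroup $T_\epsilon(t)$ on $\mathcal{H}$, Corollary~\ref{LipschitzhG} produces a bounded absorbing set $\mathbb{B}\subset\mathcal{H}$, and Lemma~\ref{lemaW} shows that $T_\epsilon(t)\mathbb{B}$ is exponentially attracted by a bounded set $\mathcal{C}\subset D(\mathcal{A})$.

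The remaining ingredient required by Theorem~\ref{Attractor} is the compactness of the embedding $D(\mathcal{A})\hookrightarrow\mathcal{H}$, which I would check directly from \eqref{Domain}: elements of $D(\mathcal{A})$ have $\varphi,\psi\in H^2(I)$ and $\Phi,\Psi\in H^1(0,\ell)$, while the last two components vary in the finite-dimensional space $\mathbb{C}^2$; since $H^2(I)\hookrightarrow H^1(0,\ell)$ and $H^1(0,\ell)\hookrightarrow L^2(0,\ell)$ are compact (Rellich--Kondrachov on each subinterval composing $I$) and finite-dimensional spaces embed compactly into themselves, the product embedding $D(\mathcal{A})\hookrightarrow\mathcal{H}$ is compact. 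Applying Theorem~\ref{Attractor} then yields the unique compact global attractor $\mathfrak{A}_\epsilon\subset D(\mathcal{A})$. I do not expect a genuine obstruction here; the only points calling for some care are (i) that the truncated nonlinearities really be \emph{globally} Lipschitz on $\mathcal{H}$, where the one-dimensional embedding $H^1\hookrightarrow L^\infty$ is essential, and (ii) that the differentiability and growth conditions \eqref{Freg1}--\eqref{Freg2} on $\mathcal{F}$ hold for \eqref{fff}, which was already asserted in the lines following Lemma~\ref{Energy} and underlies the applicability of Lemma~\ref{lemaW}.
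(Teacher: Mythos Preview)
Your proposal is correct and follows the same approach as the paper: apply the abstract Theorem~\ref{Attractor} and verify its hypotheses, the only one not already recorded in the text being the compactness of the embedding $D(\mathcal{A})\hookrightarrow\mathcal{H}$. The paper's proof is just the terse two-line version of what you wrote, since the verifications of \eqref{ff0}--\eqref{ff2} and \eqref{Freg1}--\eqref{Freg2} for $\mathcal{F}$ were already carried out in the discussion preceding Lemma~\ref{Energy} and Theorem~\ref{ExpN}.
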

\begin{proof}
By Theorem \ref{Attractor} we only need to check that the immersion $D(\mathcal{A})\subset \mathcal{H}$ is compact. But this is an immediate consequence of \eqref{Domain} and the phase space $\mathcal{H}$. 
\end{proof}

Now we are in conditions to establish our main result to the normal compliance contact problem. 

\begin{Th} \label{Attractor2bis}
 Let us denote by $T(t)$ the semigroup defined by problem  \eqref{NorComp}, then
for any initial data
$ (\varphi_0,\varphi_1,\psi_0,\psi_1)\in \mathcal{H}$ there exists a global mild solution to
 problem  \eqref{NorComp}, the semigroup $T(t)$ is exponentially stable provided  $\mathcal{F}(0)=0$. Moreover it  possesses a unique compact global attractor $\mathfrak{A}_0$ contained in $D(\mathcal{A})$. 
\end{Th}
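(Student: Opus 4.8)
The plan is to obtain $T(t)$, its exponential decay and its global attractor as the limit $\epsilon\to0$ of the hybrid semigroups $T_\epsilon(t)$ of \eqref{PenSystemiip}, for which all three properties are already available from Theorem~\ref{ExpN} and Theorem~\ref{Attractor2}; the whole argument reduces to making the estimates behind those results uniform in $\epsilon$ and then identifying the limit. \emph{Uniform bounds.} Fix $(\varphi_0,\varphi_1,\psi_0,\psi_1)\in\mathbf H$ and let $U^\epsilon=(\varphi^\epsilon,\varphi^\epsilon_t,\psi^\epsilon,\psi^\epsilon_t,v^\epsilon,v^\epsilon_t)$ solve \eqref{PenSystemiip} with $v^\epsilon(0)=\varphi_0(\ell)$ and $v^\epsilon_t(0)=\varphi_1(\ell)$. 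The energy inequality of Lemma~\ref{Energy} bounds $(\varphi^\epsilon,\varphi^\epsilon_t,\psi^\epsilon,\psi^\epsilon_t)$ in $L^\infty(0,\infty;\mathbf H)$ uniformly in $\epsilon\le1$, since the only $\epsilon$-dependent part of $E(0)$ is $\epsilon|v^\epsilon(0)|^2+\epsilon|v^\epsilon_t(0)|^2\to0$; it also gives $\epsilon|v^\epsilon|^2+\epsilon|v^\epsilon_t|^2\le cE(0)$, so $\epsilon v^\epsilon,\epsilon v^\epsilon_t\to0$. Because $V_0,V_\ell\hookrightarrow C[0,\ell]$, the trace $v^\epsilon(t)=\varphi^\epsilon(\ell,t)$ is uniformly bounded, and the observability estimate of Lemma~\ref{observability} (with the weight $q$ of \eqref{def-q}; its proof extends to the forced system \eqref{PenSystemiip}, the nonlinear terms being dominated by the energy) gives $\int_0^t|S^\epsilon(\ell,s)|^2\,ds\le cE(0)$ with $c$ independent of $\epsilon$, whence $\epsilon v^\epsilon_{tt}\to0$ in the sense of distributions. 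Finally, the decay constants of $\mathcal T(t)=e^{\mathcal A t}$ are $\epsilon$-uniform: $\omega_{ess}(\mathcal T(t))=\omega_{ess}(\mathbf T(t)\Pi)$ by Lemma~\ref{lemma-T00} does not depend on $\epsilon$, and the $\epsilon$-independent observability bound together with Theorem~\ref{T00} and Theorem~\ref{TNovo} gives $\omega_\sigma(\mathcal A)<0$ uniformly; feeding these into Corollary~\ref{LipschitzhG} and Lemma~\ref{lemaW} produces an $\epsilon$-uniform absorbing ball $\mathbb B$ and, for $t\ge t_0$, an $\epsilon$-uniform bound on $(\varphi^\epsilon,\psi^\epsilon)$ in $H^2(I)\times H^2(I)$ and on $(\varphi^\epsilon_t,\psi^\epsilon_t)$ in $V_0\times V_\ell$.

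\emph{Passage to the limit.} Extract a subsequence with $(\varphi^\epsilon,\varphi^\epsilon_t,\psi^\epsilon,\psi^\epsilon_t)$ converging weakly-$*$ in $L^\infty(0,\infty;\mathbf H)$ to some $(\varphi,\varphi_t,\psi,\psi_t)$. An Aubin--Lions--Simon argument, using $\varphi^\epsilon$ bounded in $L^\infty_tH^1_x$ and $\varphi^\epsilon_t$ bounded in $L^\infty_tL^2_x$, upgrades this to $\varphi^\epsilon\to\varphi$ in $C([0,T];H^s(0,\ell))$ for any $s\in(1/2,1)$, hence $v^\epsilon=\varphi^\epsilon(\ell,\cdot)\to\varphi(\ell,\cdot)$ in $C([0,T])$; this lets us pass the $p$-th powers $[(v^\epsilon-g_2)^+]^p$ and $[(g_1-v^\epsilon)^+]^p$ to the limit. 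Letting $\epsilon\to0$ in \eqref{PenSystemiip} and using $\epsilon(v^\epsilon_{tt}+v^\epsilon_t+v^\epsilon)\to0$, the limit $(\varphi,\psi)$ solves problem~\eqref{NorComp} together with the transmission conditions \eqref{trxx} and the boundary conditions \eqref{eqNs}. Since this limit problem has at most one solution (the uniqueness proof is identical to that in Theorem~\ref{Lipschitz}, $f$ being locally Lipschitz), the full family converges and $T(t)(\varphi_0,\varphi_1,\psi_0,\psi_1):=(\varphi,\varphi_t,\psi,\psi_t)$ defines a semigroup on $\mathbf H$.

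\emph{Decay and attractor.} Passing to the limit in the $\epsilon$-uniform decay estimate \eqref{absor} (with vanishing external term, since $\mathcal F(0)=0$) gives $\|T(t)(\varphi_0,\varphi_1,\psi_0,\psi_1)\|_{\mathbf H}\le c_1e^{-\mu t}\|(\varphi_0,\varphi_1,\psi_0,\psi_1)\|_{\mathbf H}$, i.e. exponential stability. Likewise, the $\epsilon\to0$ image of the $\epsilon$-uniform set provided by Lemma~\ref{lemaW} is a bounded subset $\mathcal C$ of $D(\mathcal A_T)$ that exponentially attracts the absorbing ball $\mathbb B$; since $D(\mathcal A_T)\hookrightarrow\mathbf H$ is compact, $\mathcal C$ is a compact attracting set, and the argument of Theorem~\ref{Attractor} (standard dynamical-systems theory) yields a unique compact global attractor $\mathfrak A_0\subset\mathcal C\subset D(\mathcal A_T)$.

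\emph{Main obstacle.} The delicate point is precisely the uniformity in $\epsilon$: both the decay rate of $\mathcal T(t)$ and the regularizing estimate of Lemma~\ref{lemaW} must survive the singular limit $\epsilon\to0$, in which the coupling term $-\epsilon^{-1}S^\epsilon(\ell,t)$ in \eqref{IGA} degenerates. This is where the observability inequality of Lemma~\ref{observability}, controlling the boundary traction $S^\epsilon(\ell,\cdot)$ by the energy with a constant independent of $\epsilon$, is indispensable; once this bound is secured, the weak/strong compactness, the identification of the compliance law \eqref{NorComp}$_3$, and the transfer of the attractor are routine.
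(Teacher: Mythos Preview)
Your proposal is essentially correct and follows the same overall strategy as the paper: obtain the solution, the exponential decay, and the attractor for \eqref{NorComp} as the $\epsilon\to0$ limit of the hybrid semigroups $T_\epsilon(t)$, using the energy estimate of Lemma~\ref{Energy} and the observability inequality of Lemma~\ref{observability} to secure $\epsilon$-uniform bounds on the boundary traction $S^\epsilon(\ell,\cdot)$, and then passing to the limit by compactness and lower semicontinuity.

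The one place where you and the paper genuinely diverge is the construction of the attractor. You argue for an $\epsilon$-uniform version of the regularizing estimate in Lemma~\ref{lemaW}, so that after the limit you directly obtain a bounded set in $D(\mathcal A_T)$ attracting the absorbing ball, and then invoke the compact embedding $D(\mathcal A_T)\hookrightarrow\mathbf H$. The paper instead takes the family of attractors $\mathfrak A_\epsilon$ already produced by Theorem~\ref{Attractor2}, projects them via $\Pi$ onto $\widetilde{\mathbf H}$, and sets $\mathfrak A_0=\bigcap_{n}\Pi(\mathfrak A_{\epsilon_n})$, arguing that this intersection is a compact absorbing set for the limit dynamics. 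Your route is more in the spirit of the abstract machinery of Section~\ref{sec-semilin} and yields the regularity $\mathfrak A_0\subset D(\mathcal A_T)$ in one stroke, but it hinges on the $\epsilon$-uniformity of the constants in Lemma~\ref{lemaW}, which---as you correctly flag in your ``Main obstacle'' paragraph---requires care because the generator \eqref{IGA} contains the singular term $-\epsilon^{-1}S(\ell,t)$. The paper's intersection argument sidesteps this by working with the attractors themselves rather than with quantitative decay/regularity rates, at the cost of a less transparent justification of why $\bigcap_n\Pi(\mathfrak A_{\epsilon_n})$ actually absorbs trajectories of the limit semigroup.
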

\begin{proof}
From Theorem \ref {Lipschitz} we have that there exists only one solution to system (\ref {PenSystemii}). Using Lemma \ref{Energy} and Lemma \ref{observability} we get
\begin{eqnarray}\label{III}
\mathcal{I}_\epsilon(\ell,t) \quad\text{uniformly bounded in  } \quad L^2(0,T),
\end{eqnarray}
where  $\mathcal{I}_\epsilon(x,t) =\mathcal{I}(x,t,\varphi^\epsilon,\psi^\epsilon)$ given by  \eqref{II1}. 
This means that the first order energy is uniformly bounded for any $\epsilon>0$. Standard procedures implies that the solution of system \eqref{PenSystemii} converges in the distributional sense to system
(\ref{NorComp}). It remains only to show that conditions (\ref{Cond.Signorini}) holds.  To do that
we use  the observability inequality in Lemma \ref{observability}, and we get that $\varphi_t^\epsilon(\ell,t)$ and $S^\epsilon(\ell,t)$  are  bounded in $L^2(0,T)$, so is $v_{tt}^\epsilon$. Using (\ref {PenSystemiip})$_4$ we obtain
$$
\int_0^T\left[\epsilon v_{tt} +\epsilon v_t +\epsilon v+S^\epsilon(\ell,t)\right]\omega(t)\, dt=-\int_0^T\left[-d_2
\left[(v^{\epsilon}-g_2)^{+}\right] ^p+d_1\left[(g_1-v^{\epsilon})^{+}\right] ^p\right]\omega(t)\, dt,
$$
for any $\omega\in C_0^\infty(0,T)$.
It is no difficult to see that
$$
\lim_{\epsilon\rightarrow0}\int_0^T\left(\epsilon v_{tt}^\epsilon +\epsilon v_t^\epsilon +\epsilon v^\epsilon\right)\omega(t)\, dt=0.
$$
In fact, from (\ref{PenSystemii})$_4$  $\epsilon v_{tt}^\epsilon$ is bounded for any $\epsilon>0$  (by a constant depending on $\epsilon$) in $L^2(0,T)$, from (\ref{III}) $v_{t}^\epsilon$ is also uniformly bounded in $L^2(0,T)$. Therefore $v_{t}^\epsilon$ is a continuous function, uniformly bounded in $L^\infty(0,T)$. Making an integration by parts we find
$$
\int_0^T\epsilon v_{tt}^\epsilon\omega(t)\, dt= \epsilon \left.v_{t}^\epsilon\omega\right|_0^T-\int_0^T\epsilon v_{t}^\epsilon\omega_t(t)\, dt\quad\rightarrow\quad 0.
$$
Hence,
\begin{eqnarray*}
\lim_{\epsilon\rightarrow0}\int_0^TS^\epsilon(\ell,t)\omega(t)\, dt&=&
\lim_{\epsilon\rightarrow0}\int_0^T\left[-d_2
(v^{\epsilon}-g_2)^{+}\right] ^p+d_1\left[(g_1-v^{\epsilon})^{+}\right]\omega(t)\, dt.
\end{eqnarray*}
Using the strong convergence of $v^{\epsilon}$ we conclude that 
$$
\int_0^T\left[S(\ell,t)\right]\omega(t)\, dt=-\int_0^T\left[-d_2
\left[(v-g_2)^{+}\right] ^p+d_1\left[(g_1-v)^{+}\right] ^p\right]\omega(t)\, dt,
$$
for any  $\omega\in C_0^\infty(0,T)$. So, we have that 
$$
S(\ell,t) =-d_2
\left[(v-g_2)^{+}\right] ^p+d_1\left[(g_1-v)^{+}\right] ^p.
$$

From this relation we obtain (\ref{Hip.Stress}). The proof of the existence is now complete. To show the asymptotic behaviour, we consider
$$
 E(t,\varphi^{\epsilon},\psi^{\epsilon})\leq  E(0,\varphi^{\epsilon},\psi^{\epsilon})e^{-\gamma t}.
 $$
Integrating over $[t_1,t_2]$ and applying the semicontinuity of the norm, we conclude the exponential stability of a solution of the Signorini problem. 

Finally, we show the existence of a global compact attractor. Let us consider the projection operator $\Pi$
\eqref{Phase2}, where  
$$
\Pi:\mathcal{H}\rightarrow\widetilde{\mathbf{H}}.
$$
So we have that $\Pi(\mathfrak{A}_\epsilon)$ is a compact set of  $\widetilde{\mathbf{H}}$. Let us denote by $\epsilon_n\rightarrow0$ and set $\mathfrak{A}_0=\cap_{n=1}^\infty\Pi(\mathfrak{A}_{\epsilon_n})$. It is not difficult to see that $\mathfrak{A}_0$  is compact.
$$
\lim_{t\rightarrow\infty}\mbox{dist}(\Pi(T_\epsilon(t)U_0), \mathfrak{A}_\epsilon)=0,\quad \forall \epsilon>0
$$
Hence  $\mathfrak{A}_0$ is a compact absorbing set for the semigroup $T_0(t)=T(t)$. 
By standard arguments of the theory of dynamical systems, we conclude that there exists a compact global attractor $\widetilde{\mathfrak{A}}_0$. 
\end{proof}

\section{Signorini problem}\label{sec-Signorini}
Here we will consider the Signorini contact problem given by system \eqref{SignoriniProblem}, with $F$ and $G$ as in the above section. 

The proof of this theorem is based on the hybrid approximation given by the system 

\begin{eqnarray}\label{PenSystemii}
\begin{array}{ll}
 \dis\rho_1\varphi^{\epsilon}_{tt}-k(\varphi^{\epsilon}_x+\psi^{\epsilon})_x=-F(\varphi^{\epsilon})
,& \text{in}\ I\times(0,\infty),\\
\noalign{\medskip}
 \dis\rho_2\psi^{\epsilon}_{tt}-b\psi^{\epsilon}_{xx}+k(\varphi^{\epsilon}_x+\psi^{\epsilon})=-G(\psi^{\epsilon})
,& \text{in}\ I\times(0,\infty),\\
\noalign{\medskip}
 \epsilon v_{tt}^{\epsilon} +\epsilon v_t^{\epsilon} +\epsilon v^{\epsilon}+S^\epsilon(L,t)=-\frac{1}{\epsilon}
(v^{\epsilon}-g_2)^{+}+\frac{1}{\epsilon}(g_1-v^{\epsilon})^{+} & \text{in}\ (0,\infty),
\end{array}
 \end{eqnarray}
 By applying the same arguments as in the preceding section, we can establish the following result for the hybrid model. 
\begin{Th} \label{Attractor3} Let us denote by $T_\epsilon(t)$ the semilinear semigroup defined by problem  \eqref{PenSystemiip},  then $T_\epsilon(t)$ possesses a unique compact global attractor $\mathfrak{A}_\epsilon$ contained in $D(\mathcal{A})$. 
\end{Th}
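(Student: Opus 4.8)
The plan is to realize \eqref{PenSystemiip} as the abstract semilinear Cauchy problem \eqref{absCE} with $\mathbb{A}=\mathcal{A}$ the generator of the linear hybrid semigroup $\mathcal{T}(t)$ and $\mathcal{F}$ the nonlinearity defined in \eqref{fff}, and then to invoke Theorem \ref{Attractor} verbatim. First I would record that $\mathcal{A}$ generates a $C_0$-semigroup of contractions (obtained via Lumer--Phillips in Section \ref{sec2}) which is exponentially stable under the arithmetic condition on $\xi$ by Theorem \ref{esstipe}; this supplies the hypothesis on $\mathbb{A}$ required by Theorem \ref{Lipschitz}, Corollary \ref{LipschitzhG}, Lemma \ref{lemaW} and Theorem \ref{Attractor}.

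Next I would check that $\mathcal{F}$ from \eqref{fff} meets the structural assumptions \eqref{ff0}--\eqref{ff2} and \eqref{Freg1}--\eqref{Freg2}. Condition \eqref{ff0} is immediate since $F(0)=G(0)=0$ and $f$ vanishes near $0$. Local Lipschitz continuity and the existence of globally Lipschitz truncations $\widetilde{\mathcal{F}_R}$ follow from the polynomial growth of $F(s)=\mu_1 s|s|^{\alpha}$, $G(s)=\mu_2 s|s|^{\beta}$ and of the contact term, together with the cut-off construction $F_{1,R}$ already displayed; the same polynomial structure yields the differentiability bound $\|D\mathcal{F}(Y)\|\le c\|Y\|_{\mathcal{H}}$ on balls and the time-regularity \eqref{Freg2}, with the factorization $\mathcal{F}(W)=\widetilde{\mathcal{F}}(\mathfrak{B}W)$ holding for $\mathfrak{B}U=(0,\varphi,0,\psi)^{\top}$ as in the Remark. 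The crucial inequality \eqref{ff2} is exactly the computation carried out before the statement: integrating $(\mathcal{F}(U),U)_{\mathcal{H}}$ produces the total time derivative of the nonnegative potential $\widehat{f}(v)+\int_0^{\ell}[\widehat{F}(\varphi)+\widehat{G}(\psi)]\,dx$, so the integral is controlled by its value at $t=0$, hence by $\|U(0)\|_{\mathcal{H}}^2$ once $\widehat{F},\widehat{G},\widehat{f}$ are bounded by the corresponding norms on bounded sets.

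With these verifications in place the conclusion is automatic: Theorem \ref{Lipschitz} gives a unique global mild solution which decays exponentially (and is strong when $U_0\in D(\mathcal{A})$), Corollary \ref{LipschitzhG} — applied with $F_0$ absorbing the constant/lower-order part of the contact forcing — produces a bounded absorbing set $\mathbb{B}$, and Lemma \ref{lemaW} shows $T_\epsilon(t)\mathbb{B}$ is exponentially attracted to a bounded set $\mathcal{C}\subset D(\mathcal{A})$. Since the domain \eqref{Domain} consists of vectors with $\varphi,\psi\in H^2(I)$, $\Phi,\Psi\in H^1$ and a finite-dimensional $\mathbb{C}^2$ component, the embedding $D(\mathcal{A})\hookrightarrow\mathcal{H}$ is compact by Rellich--Kondrachov, so $\mathcal{C}$ is a compact attracting set and Theorem \ref{Attractor} yields the unique compact global attractor $\mathfrak{A}_\epsilon\subset D(\mathcal{A})$. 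I expect the only genuine care needed is bookkeeping: ensuring the constants in Corollary \ref{LipschitzhG} and Lemma \ref{lemaW} are uniform in $t$ (they may, and will, depend on the fixed $\epsilon$), and confirming that the hybrid transmission conditions at $\xi$ obstruct neither the dissipativity estimate \eqref{ff2} nor the compact embedding.
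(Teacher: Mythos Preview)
Your proposal is correct and follows essentially the same route as the paper: the paper's proof of Theorem \ref{Attractor3} simply invokes ``the same arguments as in the preceding section,'' which in turn (Theorem \ref{Attractor2}) reduces everything to Theorem \ref{Attractor} and the observation that $D(\mathcal{A})\hookrightarrow\mathcal{H}$ is compact. You have merely spelled out in more detail the verification of hypotheses \eqref{ff0}--\eqref{ff2}, \eqref{Freg1}--\eqref{Freg2} and the Rellich--Kondrachov step that the paper leaves implicit.
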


We are now in a position to establish our main result concerning the Signorini problem.

\begin{Th} \label{Attractor3bis} 
 For any initial data
$ (\varphi_0,\varphi_1,\psi_0,\psi_1)\in \mathcal{H}$ there exists a mild solution to Signorini problem \eqref{SignoriniProblem}--\eqref{Cond.Signorini} which decays as established in Theorem \ref{ExpN}, provided $\mathcal{F}(0)=0$.
 Moreover it  possesses a unique compact global attractor $\mathfrak{A}_0$ contained in $D(\mathcal{A})$. 
\end{Th}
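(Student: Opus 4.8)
The plan is to obtain the Signorini solution as the limit, as $\epsilon\to 0$, of the solutions of the penalized hybrid system \eqref{PenSystemii}, following the scheme already carried out in the proof of Theorem \ref{Attractor2bis}. First I would invoke Theorem \ref{Attractor3} to obtain, for each fixed $\epsilon>0$, a unique global solution $(\varphi^\epsilon,\psi^\epsilon,v^\epsilon)$ of \eqref{PenSystemii} together with its compact attractor $\mathfrak{A}_\epsilon\subset D(\mathcal{A})$. The energy identity of the analogue of Lemma \ref{Energy} --- with the compliance potential $\mathcal{N}_p$ replaced by the penalty potential $\tfrac{1}{2\epsilon}|(v^\epsilon-g_2)^+|^2+\tfrac{1}{2\epsilon}|(g_1-v^\epsilon)^+|^2$ --- gives $E(t,\varphi^\epsilon,\psi^\epsilon)\le E(0,\varphi^\epsilon,\psi^\epsilon)$ uniformly in $\epsilon$; in particular $\tfrac{1}{\sqrt{\epsilon}}(v^\epsilon-g_2)^+$ and $\tfrac{1}{\sqrt{\epsilon}}(g_1-v^\epsilon)^+$ remain bounded in $L^\infty(0,T)$, so that $(v^\epsilon-g_2)^+$ and $(g_1-v^\epsilon)^+$ are $O(\sqrt{\epsilon})$. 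The observability Lemma \ref{observability} then upgrades the interior energy bound to a uniform bound for $\mathcal{I}_\epsilon(\ell,\cdot)$ in $L^2(0,T)$, hence $v_t^\epsilon=\varphi_t^\epsilon(\ell,\cdot)$ and $S^\epsilon(\ell,\cdot)$ are bounded in $L^2(0,T)$; by \eqref{PenSystemii}$_3$ and the vanishing of the $\epsilon$-terms, the penalty residual $-\tfrac{1}{\epsilon}(v^\epsilon-g_2)^++\tfrac{1}{\epsilon}(g_1-v^\epsilon)^+$ is also bounded in $L^2(0,T)$.

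With these uniform bounds I would pass to a subsequence along which $(\varphi^\epsilon,\psi^\epsilon)$ converges weak-$*$ in the natural energy space, $v^\epsilon\to v$ strongly in $C([0,T])$ (by compactness, using the $L^2$-bound on $v_t^\epsilon$), $\epsilon v_{tt}^\epsilon\to 0$ in $\mathcal{D}'(0,T)$ after the integration-by-parts argument used in Theorem \ref{Attractor2bis}, and $\tfrac{1}{\epsilon}(v^\epsilon-g_2)^+\rightharpoonup\lambda_2\ge 0$, $\tfrac{1}{\epsilon}(g_1-v^\epsilon)^+\rightharpoonup\lambda_1\ge 0$ weakly in $L^2(0,T)$. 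Passing to the limit in \eqref{PenSystemii} yields the Timoshenko system \eqref{SignoriniProblem} in the distributional sense, together with $S(\ell,t)=\lambda_1-\lambda_2$. Since $(v^\epsilon-g_2)^+\to 0$ and $(g_1-v^\epsilon)^+\to 0$, the limit satisfies $g_1\le v(t)\le g_2$, which is \eqref{Cond.Signorini}; moreover, because $g_1<g_2$ the supports of the two penalty terms are disjoint, and combining the strong convergence of $v^\epsilon$ with the weak convergence of the penalty residual gives the complementarity $\lambda_1\,(v-g_1)=\lambda_2\,(g_2-v)=0$, which is precisely the stress sign condition \eqref{Hip.Stress}. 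This produces the desired mild solution of the Signorini problem.

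For the long-time behaviour I would start from the uniform exponential estimate $E(t,\varphi^\epsilon,\psi^\epsilon)\le E(0,\varphi^\epsilon,\psi^\epsilon)e^{-\gamma t}$ furnished by the analogue of Theorem \ref{ExpN} (here the hypothesis $\mathcal{F}(0)=0$ makes the equilibrium trivial and the forcing term vanish), integrate over an interval $[t_1,t_2]$, and pass to the limit using the weak lower semicontinuity of the norm, exactly as in Theorem \ref{Attractor2bis}, to deduce the exponential decay of the Signorini solution as stated in Theorem \ref{ExpN}. Finally, for the attractor I would project: each $\Pi(\mathfrak{A}_\epsilon)$ is compact in $\widetilde{\mathbf{H}}$, and setting $\mathfrak{A}_0=\bigcap_{n}\Pi(\mathfrak{A}_{\epsilon_n})$ along a sequence $\epsilon_n\to 0$ produces a compact set which, thanks to $\mbox{dist}(\Pi(T_\epsilon(t)U_0),\mathfrak{A}_\epsilon)\to 0$, is absorbing for $T(t)$; the abstract dynamical-systems machinery (Corollary \ref{LipschitzhG}, Lemma \ref{lemaW}, Theorem \ref{Attractor}) then yields a unique compact global attractor contained in $D(\mathcal{A})$.

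The main obstacle is the passage to the limit in the penalty term: one has to identify $\lambda_1$ and $\lambda_2$ as nonnegative multipliers carried, respectively, by the contact sets $\{t:\,v(t)=g_1\}$ and $\{t:\,v(t)=g_2\}$, and to verify the complementarity relations encoding \eqref{Hip.Stress}. All of this hinges on the uniform $L^2(0,T)$ control of the boundary traces $\varphi_t^\epsilon(\ell,\cdot)$ and $S^\epsilon(\ell,\cdot)$ delivered by the observability Lemma \ref{observability} --- exactly the ingredient that the hybrid approximation supplies and that the classical penalty method does not control as transparently.
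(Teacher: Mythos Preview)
Your overall architecture coincides with the paper's: approximate by the hybrid penalized system \eqref{PenSystemii}, use the energy identity and Lemma~\ref{observability} to get uniform trace bounds on $\varphi_t^\epsilon(\ell,\cdot)$ and $S^\epsilon(\ell,\cdot)$, pass to the limit in the interior equations, recover the exponential decay by lower semicontinuity, and build $\mathfrak{A}_0$ from the projections $\Pi(\mathfrak{A}_{\epsilon_n})$. On all of this you match the paper essentially step for step.

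The one genuine divergence is how you recover the Signorini condition \eqref{Hip.Stress} in the limit. You propose to extract weak $L^2(0,T)$ limits $\lambda_1,\lambda_2\ge 0$ of the individual penalty terms $\tfrac{1}{\epsilon}(g_1-v^\epsilon)^+$ and $\tfrac{1}{\epsilon}(v^\epsilon-g_2)^+$ and then prove complementarity. The problem is the $L^2$ bound you invoke: from \eqref{PenSystemii}$_3$ the residual equals $\epsilon v_{tt}^\epsilon+\epsilon v_t^\epsilon+\epsilon v^\epsilon+S^\epsilon(\ell,\cdot)$, and while the last three pieces are uniformly controlled in $L^2$, the term $\epsilon v_{tt}^\epsilon$ is only shown to vanish \emph{in the sense of distributions} (after integrating by parts against a test function), not to be uniformly bounded in $L^2$. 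Hence you do not actually have the uniform $L^2$ bound on the residual that would justify weak $L^2$ convergence of $\lambda_1,\lambda_2$; at best they live in $H^{-1}(0,T)$ or are measures, and then the pointwise complementarity argument you sketch needs substantial extra work.

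The paper avoids this entirely by the monotonicity/variational-inequality route: it tests \eqref{PenSystemii}$_3$ against $u-v^\epsilon$ with $u$ in the constraint set $\mathcal{K}$, uses the elementary sign computation
\[
\int_0^T(v^\epsilon-g_2)^+[u-v^\epsilon]\,dt\le 0,\qquad -\int_0^T(g_1-v^\epsilon)^+[u-v^\epsilon]\,dt\le 0,
\]
valid for every $g_1\le u\le g_2$, and passes to the limit to obtain directly
\[
\int_0^T S(\ell,t)\,[u(\ell,t)-\varphi(\ell,t)]\,dt\ge 0\qquad\forall\,u\in L^2(0,T;\mathcal{K}),
\]
from which \eqref{Hip.Stress} follows. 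This requires no identification of the individual multipliers and no uniform $L^2$ control of $\epsilon v_{tt}^\epsilon$; only its distributional vanishing is used. Replacing your multiplier extraction by this monotonicity step closes the gap and makes your argument coincide with the paper's.
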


\begin{proof}
Using the same arguments as in Theorem \ref{Attractor2} we  only have show that conditions (\ref{Cond.Signorini}) holds.  To do that
we use  the observability inequality in Theorem \ref{observability}, and we get that $\varphi_t^\epsilon(\ell,t)$ and $S^\epsilon(\ell,t)$  are  bounded in $L^2(0,T)$, so is $v_{tt}^\epsilon$. Using (\ref {PenSystemii})$_3$ we obtain
$$
\int_0^T\left[\epsilon v_{tt} +\epsilon v_t +\epsilon v+S^\epsilon(\ell,t)\right](u-v)\, dt=-\frac{1}{\epsilon}\int_0^T\left[(v-g_2)^{+}-(g_1-v)^{+}\right](u-v)\, dt.
$$
For any $u\in L^2(0,T;\mathcal{K})\cap H^1(0,T;L^2(0,\ell))$, where $\mathcal{K}=\{w\in H^1(0,\ell),\;\; g_1\leq u(x)\leq g_2\}.$
It is no difficult to see that
$$
\lim_{\epsilon\rightarrow0}\int_0^T\left(\epsilon v_{tt}^\epsilon +\epsilon v_t^\epsilon +\epsilon v^\epsilon\right)(u-v^\epsilon)\, dt=0.
$$
In fact, from (\ref{PenSystemii})$_4$  $\epsilon v_{tt}^\epsilon$ is bounded for any $\epsilon>0$  (by a constant depending on $\epsilon$) in $L^2(0,T)$, from (\ref{III}) $v_{t}^\epsilon$ is also uniformly bounded in $L^2(0,T)$. Therefore $v_{t}^\epsilon$ is a continuous function, uniformly bounded in $L^\infty(0,T)$. Making an integration by parts we find
$$
\int_0^T\epsilon v_{tt}^\epsilon[u-v^\epsilon]\, dt= \epsilon \left.v_{t}^\epsilon[u-v^\epsilon]\right|_0^T-\int_0^T\epsilon v_{t}^\epsilon[u_t-v_t^\epsilon]\, dt\quad\rightarrow\quad 0.
$$
Hence,
\begin{eqnarray*}
\lim_{\epsilon\rightarrow0}\int_0^TS^\epsilon(\ell,t)[u(t)-v(t)]\, dt&=&
\lim_{\epsilon\rightarrow0}\int_0^T-\frac{1}{\epsilon}\left[(v-g_2)^{+}
-(g_1-v)^{+}\right][u(t)-v(t)]\, dt.
\end{eqnarray*}
Since
\begin{eqnarray*}
\int_0^T(v-g_2)^{+}[u(t)-v(t)]\, dt
&=&\int_0^T(v-g_2)^{+}[u(t)-g_2]\;dt-\int_0^T(v-g_2)^{+}(v-g_2)\, dt\\
&=&\int_0^T(v-g_2)^{+}[u(t)-g_2]\;dt-\int_0^T(v-g_2)^{+}(v-g_2)^+\, dt\leq 0,
\end{eqnarray*}
for all $g_1\leq u\leq g_2$.
Similarly we get
\begin{eqnarray*}
-\int_0^T[(g_1-v)^{+}[u(t)-v(t)]\, dt&\leq&0.
\end{eqnarray*}
Therefore, from the last two inequalities we get
$$
\int_0^T\frac{1}{\epsilon}\Big[(v-g_2)^{+}-(g_1-v)^{+}\Big][u(t)-v(t)]\, dt\leq 0, \quad \forall \epsilon>0.
$$
For any $u\in H^1(0,T;L^2(0,\ell))$ such that $g_1\leq u\leq g_2$. Taking the limit $\epsilon\rightarrow 0$ we get
$$
\int_0^TS(\ell,t)[u(\ell,t)-\varphi(\ell,t)]\, dt
\geq 0,\quad \forall u\in L^2(0,T;\mathcal{K}).
$$
From this relation we obtain (\ref{Hip.Stress}). The proof of the existence is now complete. To show the asymptotic behaviour, let us consider 
$$
 E(t,\varphi^{\epsilon},\psi^{\epsilon})\leq  E(0,\varphi^{\epsilon},\psi^{\epsilon})e^{-\gamma t}.
 $$
Integrating over $[t_1,t_2]$ and applying the semicontinuity of the norm, we conclude the exponential stability of a solution of the Signorini problem.
Finally, we show the existence of a global compact attractor. Let us consider the projection operator $\Pi$
\eqref{Phase2}, where  
$$
\Pi:\mathcal{H}\rightarrow\widetilde{\mathbf{H}}.
$$
So we have that $\Pi(\mathfrak{A}_\epsilon)$ is a compact set of  $\widetilde{\mathbf{H}}$. 
Let us denote by $\epsilon_n\rightarrow0$ and set $\mathfrak{A}_0=\cap_{n=1}^\infty\Pi(\mathfrak{A}_{\epsilon_n})$. It is not difficult to see that $\mathfrak{A}_0$  is a compact. Since 

$$
\lim_{t\rightarrow\infty}\mbox{dist}(\Pi(T_\epsilon(t)U_0), \mathfrak{A}_\epsilon)=0,\quad \forall \epsilon>0
$$

This implies that $\mathfrak{A}_0$ is a compact absorbing set for the semigroup $T_0(t)=T(t)$. 
By standard arguments of the theory of dynamical systems we conclude that there exists a compact global attractor $\widetilde{\mathfrak{A}}_0$.
\end{proof}

\begin{Rem}
The uniqueness of the solution to Signorini problem \eqref{SignoriniProblem}--\eqref{Cond.Signorini} remains an open
question.
\end{Rem}

\centerline{\textsc{Funding}}

{\small J.E. Mu\~{n}oz Rivera would like to thank CNPq project 307947/2022-0 and  Fondecyt Proyect 1230914, for the financial support.
M.G. Naso has been partially supported by Gruppo Nazionale per la Fisica Matematica (GNFM) of the Istituto Nazionale di Alta Matematica
(INdAM).}

\medskip

\centerline{\textsc{Conflict of interest}}

{\small This work does not have any conflict of interest.}

\bibliographystyle{amsplain}
\bibliography{bibl.bib}

\end{document}